\theoremstyle{proof}
\theoremstyle{plain}
\newtheorem{theorem}{Theorem}[section]
\newtheorem{lem}[theorem]{Lemma}
\newtheorem{prop}[theorem]{Proposition}
\newtheorem{col}[theorem]{Corollary}
\theoremstyle{definition}
\newtheorem{defn}[theorem]{Definition}
\newtheorem{example}[theorem]{Example}
\newtheorem{notation}[theorem]{Notation}
\newcommand{\XX}{{\mathcal{X}}}
\newcommand{\YY}{{\mathcal{Y}}}
\newcommand{\RR}{{\mathcal{R}}}
\newcommand{\D}{\Delta}
\newcommand{\st}{\ | \ }
\newcommand{\ver}{\mbox{Vert}}
\begin{document}

\title{Graded Betti numbers of path ideals of cycles and lines}

 \author{Ali Alilooee}
\address{University of Wisconsin-Stout, Department of Mathematics and Statistics,
Jarvis Hall-Science Wing,
Menomonie, WI, USA }
\email{ a-alilooeedolatabad@wiu.edu}

\author{Sara Faridi}
\address{Dalhousie University, Department of Mathematics, 6316 Coburg Rd.
Halifax, NS, Canada B3H 4R2 }
\email{faridi@mathstat.dal.ca}















\maketitle

\begin{abstract}

We use purely combinatorial arguments to give a formula to compute all graded Betti numbers of path ideals of paths and cycles.
As a consequence we can give new and short proofs for the known formulas of
regularity and projective dimensions of path ideals of paths. 

\end{abstract}



\section{Introduction}


Path complexes are simplicial complexes whose facets encode paths of a
fixed length in a graph. These simplicial complexes in turn correspond
to monomial ideals called ``path ideals''.  Path ideals of graphs were
first introduced by~\cite{Conca1999} in a different
algebraic context, but the study of algebraic invariants corresponding
to their minimal free resolutions has become popular, with works of~\cite{R.Bouchat2010} and~\cite{He2010}, and the authors~\cite{AF2012}.
The papers cited above give partial information on Betti numbers of
path ideals. In this paper we use purely combinatorial arguments based
on our results in~\cite{AF2012} to give an explicit formula for all the
graded Betti numbers of path ideals of paths and cycles. As a
consequence we can give new and short proofs for the known formulas of
regularity and projective dimensions of path ideals of path graphs.


\section{Preliminaries}





A \textbf{simplicial complex} on vertex set $\XX=\{x_1,\dots,x_n\}$ is
a collection $\D$ of subsets of $\XX$ such that $\{x_i\}\in \D$ for
all i, and if $F \in \D$ and $G \subset F$, then $G \in \D$. The
elements of $\D$ are called \textbf{faces} of $\D$ and the maximal
faces under inclusion are called \textbf{facets} of $\D$. We denote
the simplicial complex $\D$ with facets $F_1,\dots,F_s$ by $\langle
F_1,\dots,F_s\rangle$. We call $\{F_1,\dots,F_s\}$ the facet set of
$\D$ and is denoted by $F(\D)$. The set $\XX$ is the vertex set of
$\D$ and is denoted by $\ver(\D)$.  A \textbf{subcollection} of $\D$
is a simplicial complex whose facet set is a subset of the facet set
of $\D$. For $\YY\subseteq\XX$, an \textbf{induced subcollection} of
$\D $ on $\YY$, denoted by $\D_{\YY}$, is the simplicial complex whose
vertex set is a subset of $\YY$ and facet set is $\{F\in F(\D) \st F
\subseteq {\YY}\}.$
If $F$ is a face of $\D=\langle F_1,\dots,F_s\rangle$ and $\XX=\ver(\D)$, the
\textbf{complements} of a facet $F_i$ and of the simplicial complex $\D$ are 
\begin{eqnarray*}
(F_i)_\XX^c=\XX\setminus {F_i} & \mbox{and}& \D_\XX^c=\langle (F_1)^c_\XX,\dots,(F_s)^c_\XX \rangle.
\end{eqnarray*}
Note that if $\YY \subsetneqq \ver (\D)$, then
$(\D_{\XX}^{c})_\YY=(\D_\YY)^{c}_\YY$.

From now on we assume that $R=K\left[x_1,\dots,x_n\right]$ is a
polynomial ring over a field $K$. Suppose that $I$ is an ideal in $R$
minimally generated by square-free monomials $M_1,\ldots,M_s$. The
\textbf{facet complex} $\D(I)$ associated to $I$ has vertex set
$\{x_1,\dots,x_n\}$ and is defined as $\D(I)=\langle F_1,\ldots,F_s
\rangle \mbox{ where } F_i=\{x_j \st x_j | M_i,\ 1\leq j\leq n\}, \ 1
\leq i \leq s.$ Conversely if $\D$ is a simplicial complex with
vertices labeled $x_1,\ldots,x_n$, the \textbf{facet ideal} of $\D$ is
defined as $I(\D)=( \prod_{x \in F}x \st \ F \mbox{ is a facet of} \D).$

Given a  homogeneous ideal $I$ of the polynomial ring $R$ there exists a \textbf{graded minimal finite free resolution}
$$0\rightarrow {\displaystyle
   \bigoplus_{d}}R(-d)^{\beta_{p,d}}\rightarrow\cdots{\displaystyle
   \rightarrow\bigoplus_{d}}R(-d)^{\beta_{1,d}}\rightarrow
 R\rightarrow R /I \rightarrow 0$$ of $R /I $ in which $R(-d)$ denotes
 the graded free module obtained by shifting the degrees of elements
 in $R$ by $d$. The numbers $\beta_{i,d}$ are the $i$-th
 $\mathbb{N}$-\textbf{graded Betti numbers} of degree $d$ of $R /I$,
 and are independent of the choice of graded minimal finite free
 resolution.












By a reformulation of Hochster's formula in~\cite[Theorem~2.8]{AF2012},
to compute Betti numbers we only need to consider induced
subcollections $\Gamma=\D_\YY$ of a simplicial complex $\D$ with
$\YY=\ver (\Gamma)$.


\section{Path complexes and runs}


\begin{defn}

Let $G=(\XX,E)$ be a finite simple graph and let $t$ be an integer such
that $t\geq 2$.  If $x$ and $y$ are two vertices of $G$, a
\textbf{path} (or path) of length $(t-1)$ from $x$ to $y$ is a
sequence of distinct vertices $x=x_{i_1},\dots, x_{i_{t}}=y$ of $G$ such that
$\{x_{i_{j}},x_{i_{j+1}}\}\in E$ for all $j= 1,2,\dots,t-1$.
We define the {\bf path ideal} of $G$, denoted by $I_t(G)$ to be the
ideal of $K[x_1,\dots,x_n]$ generated by the monomials of the form
$x_{i_1}x_{i_2}\dots x_{i_t}$ where $x_{i_1},x_{i_2},\dots,x_{i_t}$ is
a path in $G$.
The facet complex of $I_t(G)$, denoted by $\D_t(G)$, is called the {\bf path complex} of the graph $G$.
\end{defn}
Two special cases that we will be considering in this paper are when
$G$ is a {\bf cycle} $C_n$, or a {\bf path graph} (or {\bf path}) $L_n$ on vertices
$\{x_1,\dots,x_n\}$.
$$C_n=\langle x_1x_2,\ldots,x_{n-1}x_n,x_nx_1\rangle \mbox{\ and \ }
L_n=\langle x_1x_2,\ldots,x_{n-1}x_n\rangle.$$
\begin{example} Consider the cycle $C_5$ with vertex set $\XX=\{x_1,\dots,x_5\}$ Then 
$$I_4(C_5)= (x_1x_2x_3x_4, \\ x_2x_3x_4x_5, x_3x_4x_5x_1, x_4x_5x_1x_2, x_5x_1x_2x_3).$$
\end{example}

\begin{notation} Let $i$ and $n$ be two positive integers.
For (a set of) labeled objects we use the notation $\mod n$ to denote
$$x_i \mod n \ =\{x_j \st 1\leq j \leq n, i\equiv j \mod n\}$$
and
$$\{x_{u_1},x_{u_2},\dots,x_{u_t}\} \mod n\  =\{x_{u_j}\mod n \st j=1,2,\dots,n\}.$$
\end{notation}
Let $C_n$ be a cycle on vertex set $\XX=\{x_1,\dots,x_n\}$ and $t< n$. The {\bf standard labeling} of the facets of 
$\D_t(C_n)$ is as follows. 

We let $\D_t(C_n)=\langle F_1,\dots,F_n\rangle$ where $F_i=\{x_i,x_{i+1},\dots,x_{i+t-1}\}\mod n$ for 
all $1\leq  i \leq n$. Since for each $1\leq i \leq n$ we have $$\begin{array}{llll}
   F_{i+1}\setminus F_{i}=\{x_{t+i}\}&\mbox{and}& F_{i}\setminus
   F_{i+1}=\{x_{i}\}&\mod n,
\end{array}$$
it follows that $\begin{array}{lllll} \left|F_i\setminus
  F_{i+1}\right|=1&\mbox{and}& \left|F_{i+1}\setminus
  F_{i}\right|=1&\mod n&\mbox{for all $1\leq i\leq
    n-1$}.
\end{array}$

\begin{defn}\label{defn:defn3.5}
 Given an integer $t$, we define a
{\bf run} to be the path complex of a path graph. A run which has $p$
facets is called a \textbf{run of length $p$} and corresponds to
$\D_t(L_{p+t-1})$. Therefore, a run of length $p$ has $p+t-1$ vertices.
\end{defn}

\begin{example} Consider the cycle $C_7$ on vertex set
$\XX=\{x_1,\dots x_7\}$ and the simplicial complex $\D_4(C_7)$. The following induced subcollections are two runs in  $\D_4(C_7)$ 
$$\begin{array}{lll} \D_1&=&\langle
   \{x_1,x_2,x_3,x_4\},\{x_2,x_3,x_4,x_5\}\rangle\\ \D_2&=&\langle
    \{x_1,x_2,x_6,x_7\},\{x_1,x_2,x_3,x_7\},\{x_1,x_2,x_3,x_4\}\rangle.
\end{array}$$
\end{example}
In~\cite{AF2012} we show that every induced subcollection of the path complex of a cycle is a disjoint union of runs
\cite[Proposition~3.6]{AF2012}, and that two induced subcollections of the path complex of a cycle composed of the same number of runs of the
same lengths are homeomorphic \cite[Lemma~3.2.9]{AThesis2015}. Then all the information we need to compute the homologies of induced
subcollections of $\D_t(C_n)$ depends on the number and the lengths of
the runs.
\begin{defn}\label{defn:e(s_1)} For a fixed integer $t \geq 2$,
let the pure $(t-1)$-dimensional simplicial complex $\Gamma=\langle
F_1,\ldots,F_s\rangle$ be a disjoint union of runs of length
$s_1,\ldots,s_r$. Then the sequence of positive integers
$s_1,\ldots,s_r$ is called a \textbf{run sequence} on
$\YY=\ver(\Gamma)$, and we use the notation
$$E(s_1,\dots,s_r)=\Gamma^c_\YY=\langle(F_1)_{{\YY}}^{c},\dots,
(F_s)_{\YY}^{c}\rangle.$$
\end{defn}


\section{Graded Betti numbers of path ideals}


We focus on Betti numbers of degree less than $n$, as those of degree $n$ were computed in~\cite{AF2012}. By~\cite[Theorem~2.8]{AF2012} we need to
count induced subcollections. 

\begin{defn}\label{d:eligible} Let $i$ and $j$ be positive integers.
We call an induced subcollection $\Gamma$ of $\D_t(C_n)$ an {\bf
$(i,j)$-eligible subcollection} of $\D_t(C_n)$ if $\Gamma$ is
composed of disjoint runs of lengths
 \begin{eqnarray} (t+1)p_1+1,\dots, (t+1)p_{\alpha}+1, (t+1)q_1+2,
  \ldots, (t+1)q_{\beta}+2\label{eqn:length1}
 \end{eqnarray}
for nonnegative integers $\alpha, \beta,p_1,p_2,\dots,p_{\alpha},q_1,q_2,\dots,q_{\beta}$, which satisfy the
following conditions
$$\begin{array}{lll} j&=&(t+1)(P+Q)+t(\alpha+\beta)+\beta
 \\ i&=&2(P+Q)+2\beta+\alpha,
  \end{array}$$
where $P=\sum_{i=1}^{\alpha} p_i$ and $Q=\sum_{i=1}^{\beta} q_i$.
\end{defn}
Eligible subcollections count the graded Betti numbers.
\begin{theorem}[~\cite{AF2012} Theorem~5.3]\label{lem:lem15} Let $I=I(\Lambda)$ be the facet ideal of an induced subcollection $\Lambda$ of
$\D_t(C_n)$. Suppose that $i$ and $j$ are integers with $i\leq j<n$. Then the ${\mathbb N}$-graded Betti number $\beta_{i,j}(R/I)$ is the number
of $(i,j)$-eligible subcollections of $\Lambda$.
\end{theorem}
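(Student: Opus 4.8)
The plan is to reduce the computation of $\beta_{i,j}(R/I)$, via Theorem~\ref{col:col15}, to a combinatorial count of induced subcollections together with the dimensions of certain reduced homology groups, and then to identify precisely which induced subcollections contribute. By Theorem~\ref{col:col15} we have
$$\beta_{i,j}(R/I) = \sum_{\Gamma \subseteq \Lambda,\ |\ver(\Gamma)| = j} \dim_K \widetilde{H}_{i-2}(\Gamma^c_{\ver(\Gamma)}),$$
where the sum runs over induced subcollections $\Gamma$ of $\Lambda$ on exactly $j$ vertices. Since $\Lambda$ is itself an induced subcollection of $\D_t(C_n)$, every such $\Gamma$ is again an induced subcollection of $\D_t(C_n)$, hence by the cited results of~\cite{AF2012} (Proposition~3.6) it is a disjoint union of runs, and by Lemma~3.8 of~\cite{AF2012} its homeomorphism type (and so all of $\dim_K \widetilde{H}_\bullet(\Gamma^c_{\ver(\Gamma)})$) depends only on the multiset of run lengths. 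Thus the whole sum is a weighted count over run-length multisets.

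First I would fix a disjoint union of runs $\Gamma$ with run lengths $\ell_1,\dots,\ell_r$ (so $j = \ver(\Gamma) = \sum_k (\ell_k + t - 1) = r(t-1) + \sum_k \ell_k$), and compute $\widetilde{H}_\bullet$ of its complement. Because homology of a disjoint union in this complement-dual setting behaves like a join/product (the complement of a disjoint union of the pieces is the join of the complements, up to the ambient vertex split), the reduced homology of $\Gamma^c_{\ver(\Gamma)}$ decomposes as a tensor product over the runs: $\widetilde{H}_{i-2}(\Gamma^c_{\ver(\Gamma)})$ is built from the $\widetilde{H}_\bullet$ of the complements $E(\ell_k)$ of individual runs via the Künneth/join formula. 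So the key input is a computation, for a single run of length $\ell$, of $\dim_K \widetilde{H}_m(E(\ell))$ for every $m$. I expect this single-run homology to be one-dimensional in exactly one degree (depending on the residue of $\ell-1$ modulo $t+1$, matching the two families $(t+1)p+1$ and $(t+1)q+2$ in Definition~\ref{d:eligible}) and zero otherwise — this is what forces the appearance of the parameters $p_k, q_k$ and of $\alpha, \beta$. This single-run homology computation — likely by an explicit simplicial or Mayer–Vietoris/long-exact-sequence argument peeling off one facet of the run at a time, as in~\cite{AF2012} — is the technical heart and the main obstacle; everything else is bookkeeping.

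Granting the single-run computation, the join formula gives that $\widetilde{H}_{i-2}(\Gamma^c_{\ver(\Gamma)}) \ne 0$ only when each run length lies in one of the two residue classes, say $\alpha$ of them equal to $(t+1)p_k+1$ and $\beta$ of them equal to $(t+1)q_k+2$, and in that case the homology is one-dimensional and concentrated in the single homological degree determined by summing the individual contributions. I would then check that this forced degree is exactly $i-2$ with $i = 2(P+Q) + 2\beta + \alpha$, and that the vertex count $j = \sum_k(\ell_k + t - 1)$ simplifies to $(t+1)(P+Q) + t(\alpha+\beta) + \beta$ — both are direct substitutions using $P = \sum p_k$, $Q = \sum q_k$ and $r = \alpha + \beta$. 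Hence each nonzero term in the Theorem~\ref{col:col15} sum contributes exactly $1$, and it contributes precisely when $\Gamma$ is an $(i,j)$-eligible subcollection of $\Lambda$ in the sense of Definition~\ref{d:eligible}. Therefore $\beta_{i,j}(R/I)$ equals the number of $(i,j)$-eligible subcollections of $\Lambda$, as claimed. Finally I would note the hypothesis $j < n$ is used only to stay in the regime where the cited results of~\cite{AF2012} apply (induced subcollections on fewer than $n$ vertices are unions of runs — the degree-$n$ case, i.e.\ $\Gamma = \D_t(C_n)$ itself, is genuinely different and was handled separately).
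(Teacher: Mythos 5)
The paper does not prove this theorem: it is imported verbatim from \cite{AF2012} (Theorem~5.3), so there is no internal proof to compare against. Your outline does reproduce the strategy of the proof given there --- reduce via the Hochster-type formula of Theorem~\ref{col:col15}, use the fact that every induced subcollection on fewer than $n$ vertices is a disjoint union of runs and that the homology of the complement depends only on the run lengths, split that homology across the runs, and feed in a single-run homology computation that is one-dimensional exactly when the length is $1$ or $2$ modulo $t+1$. Two points to tighten. First, the complement of a disjoint union is \emph{not} the join of the complements of the pieces: it is the union of two cones (each piece joined with the full simplex on the other vertex set) whose intersection is that join, so Mayer--Vietoris gives $\widetilde{H}_i(\Gamma^c_{\ver(\Gamma)})\cong\widetilde{H}_{i-1}$ of the join, i.e.\ each additional run shifts the homological degree by one beyond the K\"unneth shift. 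This extra shift is exactly where the coefficient $2$ in $i=2(P+Q)+2\beta+\alpha$ comes from, so it must be tracked explicitly rather than absorbed into ``up to the ambient vertex split.'' Second, the single-run computation, which you rightly identify as the technical heart, is left entirely as a black box (``I expect this to be one-dimensional in exactly one degree''); without carrying out that induction --- including pinning down the exact degree $2p-1$ for length $(t+1)p+1$ and $2q$ for length $(t+1)q+2$, and the vanishing for all other lengths --- the argument is a plan rather than a proof. With those two items supplied, the bookkeeping you describe does yield the statement.
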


The following corollary is a special case of Theorem~\ref{lem:lem15}.
\begin{col}\label{col:col16} Let $I=I(\Lambda)$ be the facet ideal of an induced subcollection $\Lambda$ of $\D_t(C_n)$. Then for every $i$,
$\beta_{i,ti}(R/I)$, is the number of induced subcollections of $\Lambda$ which are composed of $i$ runs of length 1.
\end{col}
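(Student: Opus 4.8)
The plan is to derive Corollary~\ref{col:col16} directly from Theorem~\ref{lem:lem15} by specializing to the case $j = ti$ and determining exactly which $(i,j)$-eligible subcollections can occur. First I would set $j = ti$ in the system of equations from Definition~\ref{d:eligible}, so that the two defining relations become
\begin{eqnarray*}
ti &=& (t+1)(P+Q) + t(\alpha+\beta) + \beta,\\
i &=& 2(P+Q) + 2\beta + \alpha.
\end{eqnarray*}
The idea is to multiply the second equation by $t$ and subtract the first, eliminating $i$, to obtain a relation among the nonnegative integers $P, Q, \alpha, \beta$ alone. Carrying this out gives $t\bigl(2(P+Q)+2\beta+\alpha\bigr) - \bigl((t+1)(P+Q)+t(\alpha+\beta)+\beta\bigr) = 0$, which simplifies to $(t-1)(P+Q) + (t-1)\beta = 0$, i.e. $(t-1)(P+Q+\beta) = 0$.

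Since $t \geq 2$ we have $t - 1 > 0$, so this forces $P + Q + \beta = 0$, and because all of $P, Q, \beta$ are nonnegative, each must vanish: $P = Q = \beta = 0$. With $Q = \beta = 0$ there are no runs of length $(t+1)q_j + 2$ at all, and with $P = 0$ each $p_k$ must be zero, so every run of the first type has length $(t+1)\cdot 0 + 1 = 1$. Substituting back into the second equation gives $i = 2\cdot 0 + 2\cdot 0 + \alpha = \alpha$, so there are exactly $i$ runs, all of length $1$. Conversely, any induced subcollection of $\Lambda$ that is a disjoint union of $i$ runs of length $1$ arises from the parameter choice $\alpha = i$, $\beta = 0$, all $p_k = 0$, and one checks it is $(i,ti)$-eligible: indeed $j = t\alpha = ti$ and $i = \alpha$. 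Thus the $(i,ti)$-eligible subcollections of $\Lambda$ are precisely the induced subcollections composed of $i$ runs of length $1$, and Theorem~\ref{lem:lem15} identifies their number with $\beta_{i,ti}(R/I)$, provided $ti < n$ (which we may take as the standing hypothesis, or note that the $i = 0$ case is trivial and for $i \geq 1$ one needs $ti < n$ for Theorem~\ref{lem:lem15} to apply).

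The computation itself is entirely routine linear algebra over the nonnegative integers; the only point requiring a moment's care is the bookkeeping of the two-sided correspondence — checking not just that $j = ti$ forces the degenerate parameter values, but also that every collection of $i$ length-$1$ runs genuinely satisfies the eligibility conditions, so that the count in Theorem~\ref{lem:lem15} matches exactly. There is no real obstacle here; the corollary is a clean specialization, and the main thing is to present the elimination cleanly so the reader sees immediately why $t \geq 2$ collapses all higher-order terms.
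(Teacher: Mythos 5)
Your proposal is correct and follows essentially the same route as the paper: specialize Theorem~\ref{lem:lem15} to $j=ti$, eliminate $i$ between the two eligibility equations to get $(t-1)(P+Q+\beta)=0$, and conclude $P=Q=\beta=0$, hence $\alpha=i$ runs of length~$1$. The only difference is that you explicitly record the (easy) converse direction, which the paper leaves implicit.
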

\begin{proof} From Theorem~\ref{lem:lem15} we have $\beta_{i,ti}(R/I)$ is the number of $(i,ti)$-eligible subcollections of $\Lambda$. With
 notation as in Definition~\ref{d:eligible} we have
      $$\left\{\begin{array}{ll}
     ti=(t+1)(P+Q)+t(\alpha+\beta)+\beta&\\
     i=2(P+Q)+(\alpha+\beta)+\beta&\Rightarrow
     ti=2t(P+Q)+t(\alpha+\beta)+t\beta\\
       \end{array} \right.$$
Putting the two equations for $ti$ together, we conclude that $(t-1)(P+Q+\beta )=0$. But $\beta$, $P$, $Q\geq 0$ and $t\geq 2$,
so we must have $$\beta=P=Q=0 \Rightarrow p_1=p_2=\dots=p_{\alpha}=0.$$ So $\alpha=i$ and $\Gamma$ is composed of $i$ runs of length one.
  \end{proof}
Theorem~\ref{lem:lem15} holds in particular for $\Lambda=\D_t(L_{m})$ and $\Lambda=\D_t(C_{n})$ for any integers
 $m,n$. Our next statement is in a sense a converse to Theorem~\ref{lem:lem15}.
\begin{prop}\label{prop:tree}Let $t$ and $n$ be integers such that $2 \leq t \leq n$ and $I=I(\Lambda)$ be the facet ideal of $\Lambda$
 where $\Lambda$ is an induced subcollection of $\D_t(C_n)$. Then for each $i,j\in\mathbb{N}$, if $i <n$ and 
$\beta_{i,j}(R/I)\neq0$, there exist nonnegative integers $\ell,d$ such that $i\leq d<n$ and 
$$\left\{\begin{array}{lll} i&=&\ell+d\\ j&=&t\ell+d
\end{array}\right.$$
\end{prop}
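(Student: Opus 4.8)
The plan is to read off the integers $\ell$ and $d$ directly from the combinatorial description of eligible subcollections and let Theorem~\ref{lem:lem15} do all the work. Suppose $\beta_{i,j}(R/I)\neq 0$ with $i\le j<n$. By Theorem~\ref{lem:lem15} this Betti number equals the number of $(i,j)$-eligible subcollections of $\Lambda$, so there is at least one such subcollection $\Gamma$. By Definition~\ref{d:eligible}, $\Gamma$ is a disjoint union of runs whose lengths are as in~\eqref{eqn:length1} for some nonnegative integers $\alpha,\beta,p_1,\dots,p_\alpha,q_1,\dots,q_\beta$, and, setting $P=\sum_{k=1}^{\alpha}p_k$ and $Q=\sum_{k=1}^{\beta}q_k$, these satisfy
$$j=(t+1)(P+Q)+t(\alpha+\beta)+\beta \qquad\text{and}\qquad i=2(P+Q)+2\beta+\alpha.$$

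Next I would simply set $\ell=P+Q+\alpha+\beta$ and $d=P+Q+\beta$; both are nonnegative integers because $\alpha,\beta,P,Q\ge 0$. A one-line substitution then gives
$$\ell+d=\bigl(P+Q+\alpha+\beta\bigr)+\bigl(P+Q+\beta\bigr)=2(P+Q)+2\beta+\alpha=i,$$
and
$$t\ell+d=t(P+Q+\alpha+\beta)+(P+Q+\beta)=(t+1)(P+Q)+t(\alpha+\beta)+\beta=j,$$
which is exactly the asserted pair of identities. In the degenerate case $\alpha=\beta=0$ (equivalently $\Gamma$ empty and $i=j=0$) this formula already yields $\ell=d=0$, so no separate argument is needed.

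I do not expect a real obstacle: essentially all the content is hidden inside Theorem~\ref{lem:lem15}, which converts the non-vanishing of $\beta_{i,j}$ into the existence of a run sequence meeting the numerical conditions of Definition~\ref{d:eligible}. Once that is in hand, the linear system $i=\ell+d$, $j=t\ell+d$ is forced, giving $\ell=(j-i)/(t-1)$ and $d=(ti-j)/(t-1)$, and the eligibility relations identify these with $P+Q+\alpha+\beta$ and $P+Q+\beta$ respectively; in particular $(t-1)\mid(j-i)$ and both quotients are $\ge 0$, as $t\ge 2$. The only points that deserve a moment's care are checking that the hypothesis $i\le j<n$ is precisely what licenses the use of Theorem~\ref{lem:lem15} and noting the trivial case $i=j=0$ above.
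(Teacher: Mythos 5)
Your proposal is correct and follows essentially the same route as the paper: both invoke Theorem~\ref{lem:lem15} to produce an $(i,j)$-eligible subcollection and then extract $\ell$ and $d$ from the numerical conditions of Definition~\ref{d:eligible}, with $\ell=P+Q+\alpha+\beta$ and $d=P+Q+\beta$ (the paper writes these as $(j-i)/(t-1)$ and $(ti-j)/(t-1)$ via the identities $j-i=(t-1)(P+Q+\alpha+\beta)$ and $ti-j=(t-1)(P+Q+\beta)$, which is the same computation run backwards). Your direct substitution is, if anything, slightly tidier than the paper's, and no gap remains.
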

\begin{proof} From Theorem~\ref{lem:lem15} we know $\beta_{i,j}$ is equal to the number of $(i,j)$-eligible subcollections of $\Lambda$,
 where with notation as in Definition~\ref{d:eligible} we have
       $$\left\{\begin{array}{lcr}
     j=(t+1)(P+Q)+t(\alpha+\beta)+\beta
      \\ i=2(P+Q)+(\alpha+\beta)+\beta.
       \end{array}\right.$$
 It follows that
     \begin{eqnarray}
       j-i=(t-1)(P+Q+\alpha+\beta)&\mbox{and}&
       ti-j=(t-1)(P+Q+\beta)\label{eqn:ell}.
      \end{eqnarray}
We now show that there exist positive integers $\ell,d$ such that
     $i=\ell+d$ and $j=t\ell+d$. $$\begin{array}{lll}
       \left\{\begin{array}{lcr} i=\ell+d \\ j=t\ell+d
      \end{array}\right. \Rightarrow
      \begin{array}{lll} \ell= \displaystyle \frac{j-i}{t-1}&
     \mbox{and}& d=\displaystyle \frac{ti-j}{t-1}
    \end{array}.
     \end{array}$$
From (\ref{eqn:ell}) we can see that $i$ and $j$ as described above
   are nonnegative integers.
   \end{proof}
Corollary~\ref{col:col16} tells us that to compute Betti numbers $\beta_{i,ti}$ of induced subcollections of $\D_t(C_n)$ we need to count the number
 of its induced subcollections which consist of disjoint runs of length one. The next few pages are dedicated to counting
 such subcollections.  We use some combinatorial methods to  generalize a helpful formula which can be found in Stanley's
 book~\cite[page~73]{R.P.Stanley1994}.
   \begin{lem}\label{lem:lem5.6} Consider a collection of $n$
points arranged on a line. The number of ways to color $k$ points,
when there are at least $t$ uncolored points on the line between each
colored point is
$${{n-(k-1)t}\choose{k}}.$$
   \end{lem}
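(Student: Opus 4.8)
The plan is to produce an explicit bijection between valid colorings and $k$-element subsets of a linearly ordered set of size $n-(k-1)t$, after which the count is immediate from the definition of the binomial coefficient.

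First I would encode a coloring by the increasing list of positions of its colored points: labeling the points $1,2,\dots,n$ from left to right, a coloring is recorded as $1\le a_1<a_2<\dots<a_k\le n$. The points lying strictly between the consecutive colored points $a_j$ and $a_{j+1}$ are exactly those in positions $a_j+1,\dots,a_{j+1}-1$, so there are $a_{j+1}-a_j-1$ of them; the hypothesis that at least $t$ of them are uncolored is equivalent to the inequalities $a_{j+1}-a_j\ge t+1$ for $j=1,\dots,k-1$, with no constraint on the points before $a_1$ or after $a_k$. Next I would compress the forced gaps by setting $b_j=a_j-(j-1)t$ for $j=1,\dots,k$. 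Then $b_1=a_1\ge 1$; each inequality $a_{j+1}-a_j\ge t+1$ becomes $b_{j+1}-b_j\ge 1$, so $b_1<b_2<\dots<b_k$; and $b_k=a_k-(k-1)t\le n-(k-1)t$. Hence $(a_1,\dots,a_k)\mapsto\{b_1,\dots,b_k\}$ maps valid colorings into the set of $k$-subsets of $\{1,2,\dots,n-(k-1)t\}$.

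To finish I would check that this map is a bijection by exhibiting its inverse: given a $k$-subset written increasingly as $b_1<b_2<\dots<b_k$ with $b_k\le n-(k-1)t$, put $a_j=b_j+(j-1)t$; then $a_1=b_1\ge 1$, $a_{j+1}-a_j=(b_{j+1}-b_j)+t\ge t+1$, and $a_k=b_k+(k-1)t\le n$, so this is a valid coloring, and the two assignments are visibly mutually inverse. Counting the $k$-subsets of an $(n-(k-1)t)$-element set gives $\binom{n-(k-1)t}{k}$. I do not anticipate a genuine obstacle here; the only thing to watch is the boundary behaviour, namely that when $n-(k-1)t<k$ the binomial coefficient is $0$ and indeed no valid coloring exists (the compressed positions cannot be fit), and when $k=0$ both sides equal $1$ — both cases are absorbed automatically by the bijection, so at most a one-line remark is needed.
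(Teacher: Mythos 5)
Your argument is correct. It takes a slightly different route from the paper's: you encode a valid coloring by the increasing sequence of colored positions and apply the standard compression map $a_j\mapsto a_j-(j-1)t$, giving a direct bijection with the $k$-subsets of $\{1,\dots,n-(k-1)t\}$; the paper instead encodes a coloring by its gap sizes $x_0,x_1,\dots,x_k$, reduces to counting solutions of $x_0+x_1+\dots+x_k=n$ with $x_0\ge 0$, $x_i>t$ for $1\le i\le k-1$, $x_k\ge 1$, and then shifts variables and invokes the standard formula for the number of positive integer solutions of a linear equation (a compositions/stars-and-bars count cited to Grimaldi). The two arguments are morally equivalent --- your $b_j$ are essentially the partial sums of the paper's shifted gap variables --- but yours is self-contained: the inverse map is exhibited explicitly rather than delegating the final count to an external formula, and it absorbs the degenerate cases ($k=0$, or $n-(k-1)t<k$) without comment. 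The paper's version has the minor advantage of matching the ``number of solutions of an equation'' template it reuses implicitly elsewhere, but either proof is complete and nothing in your write-up needs repair.
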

\begin{proof} First label the points by $1,2,\dots,n$ from left to right, and let $a_1<a_2<\dots<a_k$ be the colored  points. For $1\leq i\leq k-1$, we define $x_i$ to be the
  number of points, including $a_{i}$, which are between $a_i$
        and $a_{i+1}$, and $x_0$ to be the number of points which
        exist before $a_1$, and $x_k$  the number of points,
        including $a_k$, which are after $a_k$.
        $$\begin{array}{llllll}
        \overbrace{\cdots}^{x_0} &\overbrace{\bullet\  \cdots}^{x_1} &
         \overbrace{\bullet\  \cdots}^{x_2} &
         {\bullet}\ \cdots &\overbrace{\bullet\ \cdots}^{x_{k-1}} &
         \overbrace{\bullet\ \cdots}^{x_{k}}\\
      1&a_1&a_2&a_3&a_{k-1}&a_k\ \ n\\
        \end{array}$$
        If we consider the sequence $x_0,x_1,\dots,x_k$ it is not
        difficult to see that there is a one to one correspondence
        between the positive integer solutions of the following
        equation and the ways of coloring $k$ points of $n$ points on
        a line with at least $t$ uncolored points between each two
        colored points.
      \begin{eqnarray*} x_0+x_1+\dots+x_k=n&\mbox{$x_0\geq 0$, $x_i > t$,
      for $1\leq i \leq k-1 $, and $x_k\geq 1$}.
      \end{eqnarray*}
      So we only need to find the number of positive integer
       solutions of this equation. Consider the following
     equation $$(x_0+1)+(x_1-t)+\dots+(x_{k-1}-t)+x_k=n-(k-1)t+1$$
       where $x_0+1\geq 1$, $x_i-t \geq 1$, for $i=0\dots,k-1$ and
     $x_k\geq 1$. The number of positive integer solution of this
       equation is (see for example~\cite {R.P.Grimaldi2003} page
       29) $${{n-(k-1)t}\choose{k}}.$$
      \end{proof}


 \begin{col}\label{col:mycol} Let $C_n$ be a graph cycle
and with the standard labeling let $\Gamma$ be a proper subcollection
of $\D_t(C_n)$ with $k$ facets $F_a, \ldots,F_{a+k-1} \mod n$. The
number of induced subcollections of $\Gamma$ which are composed of $m$
runs of length one is $${k-(m-1)t\choose m}.$$
\end{col}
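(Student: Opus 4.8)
The plan is to establish a bijection between the induced subcollections of $\Gamma$ composed of $m$ runs of length one and the $m$-element subsets $\{i_1<\dots<i_m\}$ of $\{a,a+1,\dots,a+k-1\}$ satisfying $i_{\ell+1}-i_\ell\ge t+1$ for all $\ell$, and then to count the latter by Lemma~\ref{lem:lem5.6}.

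First I would record the elementary combinatorics supplied by the standard labeling. Since $\Gamma$ carries the $k$ consecutive facets $F_a,\dots,F_{a+k-1}$, it is a run of length $k$ in the sense of Definition~\ref{defn:defn3.5}, and each facet is the interval $F_i=\{x_i,x_{i+1},\dots,x_{i+t-1}\}\bmod n$. From this I extract two facts: (i) for $a\le i\le j\le a+k-1$, the facets $F_i$ and $F_j$ are disjoint if and only if $j-i\ge t$; and (ii) if $\YY\subseteq\ver(\Gamma)$ is a union of facets of $\Gamma$, then the facets of $\Gamma$ contained in $\YY$ are exactly those $F_j$ whose index-block $\{j,\dots,j+t-1\}$ lies inside the index set of $\YY$.

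The core step is the characterization. Let $\Lambda$ be an induced subcollection of $\Gamma$ composed of $m$ runs of length one; its facet set consists of $m$ pairwise disjoint facets $F_{i_1},\dots,F_{i_m}$ of $\Gamma$ with $a\le i_1<\dots<i_m\le a+k-1$, and $\Lambda=\Gamma_\YY$ where $\YY=F_{i_1}\cup\dots\cup F_{i_m}$. Disjointness together with (i) gives $i_{\ell+1}-i_\ell\ge t$. I would rule out equality: if $i_{\ell+1}-i_\ell=t$, then the index set of $F_{i_\ell}\cup F_{i_{\ell+1}}$ is the full block $\{i_\ell,\dots,i_\ell+2t-1\}$, which contains $\{i_\ell+1,\dots,i_\ell+t\}$, so $F_{i_\ell+1}$ (a facet of $\Gamma$, since its index lies in the contiguous range $\{a,\dots,a+k-1\}$) is contained in $\YY$, contradicting that $\Gamma_\YY$ has only the $m$ facets $F_{i_\ell}$. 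Conversely, given indices $i_1<\dots<i_m$ in $\{a,\dots,a+k-1\}$ with all gaps $\ge t+1$, the index set of $\YY=\bigcup_\ell F_{i_\ell}$ is a disjoint union of $m$ blocks of length exactly $t$ separated by at least one omitted index, so by (ii) the only facets of $\Gamma$ inside $\YY$ are the $F_{i_\ell}$ themselves; hence $\Gamma_\YY=\langle F_{i_1},\dots,F_{i_m}\rangle$, which by (i) is $m$ pairwise disjoint facets, i.e. $m$ runs of length one. As an induced subcollection is determined by its facet set, this yields the claimed bijection.

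Finally I would translate the count: viewing the $k$ facets $F_a,\dots,F_{a+k-1}$ as $k$ points on a line, an $m$-subset with all consecutive index-gaps $\ge t+1$ is precisely a choice of $m$ of these points with at least $t$ uncolored points between consecutive chosen ones, so Lemma~\ref{lem:lem5.6}, applied with its ``$n$'' equal to $k$ and its ``$k$'' equal to $m$, gives $\binom{k-(m-1)t}{m}$. The main obstacle is the characterization step, specifically the two-sided interval bookkeeping showing the spacing condition is exactly $\ge t+1$: a gap of exactly $t$ forces an unwanted facet into the induced subcollection, while gaps $\ge t+1$ keep all intermediate facets out; everything else is either the already-established structure of the standard labeling or a direct appeal to Lemma~\ref{lem:lem5.6}.
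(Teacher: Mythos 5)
Your proof is correct and follows essentially the same route as the paper: both treat the $k$ consecutive facets as points on a line, identify subcollections composed of $m$ runs of length one with choices of $m$ facets having at least $t$ unchosen facets between consecutive ones, and then invoke Lemma~\ref{lem:lem5.6}. The only difference is that you spell out the verification that the spacing condition is exactly ``gap $\geq t+1$'' (a gap of exactly $t$ forcing an extra facet into the induced subcollection), which the paper's proof leaves implicit.
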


   \begin{proof} To compute the number of induced
      subcollections of $\Gamma$ which are composed of $m$ runs of
      length one, it is enough to consider the facets $F_a,\ldots,F_{a+k-1}$ as points arranged on a line and compute the
      number of ways which we can color $m$ points of these $k$
      arranged points with at least $t$ uncolored points between each
      two consecutive colored points. Therefore, by
      Lemma~\ref{lem:lem5.6} we have the number of induced
      subcollections of $\Gamma$ which are composed of $m$ runs of
      length one is ${k-(m-1)t\choose m}.$
     \end{proof}

\begin{prop}\label{lem:lem1} Let $C_n$ be a graph cycle with vertex
 set $\XX=\{x_1,\dots,x_n\}$. The number of induced subcollections of
 $\D_t(C_n)$ which are composed of $m$ runs of length one
 is $$\frac{n}{n-mt}{n-mt \choose m}.$$
\end{prop}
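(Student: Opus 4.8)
The plan is to reinterpret the count combinatorially as a problem of ``coloring'' facets arranged on a circle, and then to reduce that circular count to the linear count already recorded in Corollary~\ref{col:mycol} by a rotation/double‑counting argument. Throughout I would use the standard labeling $\D_t(C_n)=\langle F_1,\dots,F_n\rangle$ with $F_i=\{x_i,\dots,x_{i+t-1}\}\bmod n$, and I may assume $m\geq 1$ and $n>mt$ (if $n\leq mt$ there is no room for $m$ disjoint runs of length one and the stated expression is read as $0$).

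The first step --- and the delicate one --- is to establish the following dictionary: the induced subcollections $\Gamma=\D_\YY$ of $\D_t(C_n)$ that are composed of $m$ runs of length one correspond bijectively, via $\Gamma\mapsto F(\Gamma)$, to the $m$-element sets $S\subseteq\{F_1,\dots,F_n\}$ such that between any two cyclically consecutive facets of $S$ there are at least $t$ facets outside $S$. For the forward direction I would use that a run of length one is a single facet on $t$ vertices, so $\Gamma$ has exactly $m$ pairwise disjoint facets; and if two of them were $F_i$ and $F_{i+s}$ with $1\leq s\leq t$, then $x_{i+t}\in F_{i+s}$ forces $F_{i+1}\subseteq F_i\cup F_{i+s}\subseteq\YY$, so $F_{i+1}$ is a facet of $\Gamma$, which is impossible (it is strictly between $F_i$ and $F_{i+s}$ when $s\geq 2$, and when $s=1$ the adjacent facets $F_i,F_{i+1}$ lie in a common run of length $\geq 2$). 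For the reverse direction, given such an $S$ I would set $\YY=\bigcup_{F\in S}F$; this is a disjoint union of $m$ cyclic blocks of $t$ consecutive vertices, any two separated by at least one missing vertex, so any $t$ consecutive vertices contained in $\YY$ form one of these blocks --- hence $F(\D_\YY)=S$ and $\D_\YY$ is $m$ disjoint runs of length one.

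With the dictionary in place, let $N$ denote the desired number and, for $1\leq a\leq n$, let $N_a$ be the number of such subcollections whose facet set contains $F_a$. The rotation $x_i\mapsto x_{i+1}\bmod n$ is an automorphism of $\D_t(C_n)$ sending $F_a$ to $F_{a+1}$, so $N_a=N_1$ for every $a$; counting in two ways the pairs $(\Gamma,F)$ with $F\in F(\Gamma)$ gives $\sum_{a=1}^n N_a=mN$, hence $N=\frac{n}{m}N_1$. To compute $N_1$, note that if $F_1\in S$ then the separation condition forbids any other element of $S$ in $\{F_2,\dots,F_{t+1}\}$ or in $\{F_{n-t+1},\dots,F_n\}$, so the remaining $m-1$ facets form a $t$-separated family inside the proper subcollection $\langle F_{t+2},\dots,F_{n-t}\rangle$, which has $n-2t-1$ facets arranged on a line; conversely any such family extends uniquely to a valid $S$. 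Corollary~\ref{col:mycol} then gives $N_1=\binom{(n-2t-1)-(m-2)t}{m-1}=\binom{n-mt-1}{m-1}$, and therefore $N=\frac{n}{m}\binom{n-mt-1}{m-1}=\frac{n}{n-mt}\binom{n-mt}{m}$, using the elementary identity $\frac{1}{m}\binom{n-mt-1}{m-1}=\frac{1}{n-mt}\binom{n-mt}{m}$.

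The main obstacle, as indicated, is getting the dictionary of the second paragraph exactly right: one must show that $t$-separation of the chosen facets is precisely what prevents any extra facet $F_k$ from slipping into the induced subcollection, and conversely that any violation of $t$-separation either merges two chosen facets into a longer run or inserts a third facet between them. Once that is settled, the passage from cycle to line is the standard ``transfer a configuration to a line by cutting at a distinguished colored point'' device. As an alternative to the last paragraph one can count $N$ directly: record the gap vector $(y_1,\dots,y_m)$ of numbers of unchosen facets between cyclically consecutive chosen facets, with each $y_i\geq t$ and $\sum_{i} y_i=n-m$; counting pointed configurations by stars and bars reproves $N=\frac{n}{m}\binom{n-mt-1}{m-1}$ without reference to Corollary~\ref{col:mycol}.
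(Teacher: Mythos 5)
Your proof is correct, but it takes a genuinely different route from the paper's. The paper conditions on whether the subcollection covers the distinguished vertex $x_n$: those avoiding $x_n$ live in the line $\langle F_1,\dots,F_{n-t}\rangle$ and contribute $\binom{n-mt}{m}$ by Corollary~\ref{col:mycol}, while those covering $x_n$ contain exactly one of the $t$ facets $F_{n-t+1},\dots,F_n$, each choice leaving $m-1$ runs to place in a line of $n-2t-1$ facets, for a total of $t\binom{n-mt-1}{m-1}+\binom{n-mt}{m}$. You instead exploit the rotation automorphism of $\D_t(C_n)$ and double-count incidences $(\Gamma,F)$ with $F\in F(\Gamma)$ to get $N=\tfrac{n}{m}N_1$, then cut the circle at the chosen facet $F_1$ to compute $N_1=\binom{n-mt-1}{m-1}$; this is the classical necklace-counting device and reaches the closed form in one step, with the same elementary identity reconciling the two expressions. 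Two further points of comparison: your explicit ``dictionary'' between subcollections consisting of $m$ runs of length one and $t$-separated $m$-subsets of the cyclically arranged facets is a detail the paper leaves implicit (Corollary~\ref{col:mycol} simply asserts the reduction to the coloring problem), so your argument actually fills in a justification the paper skips; on the other hand, the paper's vertex-based case split has the advantage of rehearsing exactly the decomposition it reuses later (e.g.\ in the proof of Theorem~\ref{theorem:theorem4}(ii), where subcollections are sorted by whether they involve the last facet), whereas your symmetry argument is specific to the fully rotation-invariant situation of the whole cycle. Your closing remark that a direct gap-vector/stars-and-bars count bypasses Corollary~\ref{col:mycol} entirely is also correct and would make the proposition self-contained.
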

    \begin{proof} Recall that $\D_t(C_n)=\langle F_1,\dots,F_n\rangle$
    with standard labeling.  First we compute the number of induced
     subcollections of $\D_t(C_{n})$ which consist of $m$ runs of
   length one and do not contain the vertex $x_n$. There are $t$
    facets of $\D_t(C_n)$ which contain $x_n$, the remaining facets
    are $F_1,\dots,F_{n-t}$, and so by Corollary~\ref{col:mycol} the
     number we are looking for is
      \begin{eqnarray} {n-t-(m-1)t \choose m}
        = {n-mt \choose m}.\label{eqn:number}
      \end{eqnarray}
    Now we are going to compute the number of induced subcollections
     $\Gamma$ which consist of $m$ runs of length one and include
    $x_n$. We have $t$ facets which contain $x_n$, they are
     $F_{n-t+1}\dots,F_n$. Each such $\Gamma$ will contain one $F_i\in
    \{F_{n-t+1}\dots,F_n\}$ as the run containing $x_n$, and $m-1$
    other runs of length one which have to be chosen so that they are
     disjoint from $F_i$. So we are looking for $m-1$ runs of length
    one in the subcollection $\Gamma'=\langle
     F_{i+t},\ldots,F_{i-t}\rangle \mod n$. The subcollection
     $\Gamma'$ has $n-2t-1$ facets, so by Corollary~\ref{col:mycol} it
     has $${n-2t-1-(m-2)t \choose m-1}={n-mt-1 \choose m-1}$$ induced
     subcollections that consist of runs of length one.
     Putting this together with the number of ways to choose $F_i$ and
     with (\ref{eqn:number}) we conclude that the number of induced
     subcollections of $\D_t(C_n)$ which are composed of $m$ runs of
     length one is $$t{n-mt-1 \choose m-1}+ {n-mt \choose m}=
  \frac{n}{n-mt} {n-mt \choose m}.$$
\end{proof}

We apply these counting facts to find Betti numbers in specific
degrees.

\begin{col}\label{col:col4.2} Let $n\geq 2$ and $t$ be an integer such
that $2\leq t \leq n$. Then for the cycle $C_n$ we
  have
  $$\beta_{i,ti}(R/I_t(C_n))=\frac{n}{n-ti}{ n-ti \choose i }.$$
\end{col}

\begin{proof} From Corollary~\ref{col:col16} we have $\beta_{i,ti}(R/I_t(C_n))$
     in each of the three cases is the number of induced
     subcollections of $\D_t(C_n)$ which are composed of $i$ runs of
     length 1. The formula now follows from Proposition~\ref{lem:lem1}.
     \end{proof}

The following Lemma is the core of our counting later on in this section.
\begin{lem}\label{lem:addingfacet}
 Let $\D_t(C_n)=\langle F_1,F_2,\dots, F_n \rangle$, $2\leq t \leq n$,
 be the standard labeling of the path complex of a cycle $C_n$ on
 vertex set $\XX=\{x_1,\ldots,x_n\}$. Let $i$ be a positive integer
 and $\Gamma=\langle F_{c_1},F_{c_2},\dots,F_{c_i}\rangle$ be an
 induced subcollection of $\D_{t}(C_n)$ consisting of $i$ runs of
 length 1, with $1\leq c_1<c_2<\dots<c_i\leq n$.  Suppose that $\Sigma$ is
 the induced subcollection on $\ver(\Gamma)\cup \{x_{c_u+t}\}$ for
some $1\leq u \leq i$. Then
$$|\Sigma|= \left\{
\begin{array}{lll}
|\Gamma|+ t & u <i  \ \mbox{ and}& c_{u+1}=c_u+t+1\\
|\Gamma|+ 1 & u =i  \ \mbox{ or}& c_{u+1}>c_u+t+1
\end{array}\right.$$
\end{lem}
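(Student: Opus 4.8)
The plan is to compare the two induced subcollections facet by facet. Write $\YY=\ver(\Gamma)$ and $\YY'=\YY\cup\{x_{c_u+t}\}$. Since $\Gamma=\D_t(C_n)_{\YY}$ and $\Sigma=\D_t(C_n)_{\YY'}$ are induced, their facet sets are $\{F_j\st F_j\subseteq\YY\}$ and $\{F_j\st F_j\subseteq\YY'\}$, and because $\YY'$ has exactly one more vertex than $\YY$, every facet of $\Sigma$ that is not a facet of $\Gamma$ must contain the new vertex $x_{c_u+t}$. Hence $|\Sigma|-|\Gamma|$ equals the number of facets $F_j$ of $\D_t(C_n)$ with $x_{c_u+t}\in F_j\subseteq\YY'$, and the whole problem reduces to counting those.

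First I would record the shape of $\YY$. Because $\Gamma$ is a disjoint union of $i$ runs of length one, each $F_{c_v}$ contributes the block of $t$ consecutive vertices $\{x_{c_v},\dots,x_{c_v+t-1}\}$ (indices modulo $n$); these $i$ blocks are pairwise disjoint, $|\YY|=it$, and — since the induced subcollection on $\YY$ is \emph{exactly} these $i$ separate one-facet runs — consecutive indices are well spread out, $c_{v+1}\ge c_v+t+1$ for $1\le v\le i-1$, together with the cyclic condition $c_1+n-c_i\ge t+1$. In particular the block of $\YY$ sitting immediately after $F_{c_u}$ is $F_{c_{u+1}}$ if $u<i$ and $F_{c_1}$ if $u=i$, and its first vertex has index strictly larger than $c_u+t$.

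Next I would isolate the candidate facets. The facets of $\D_t(C_n)$ containing $x_{c_u+t}$ are precisely $F_{c_u+1},F_{c_u+2},\dots,F_{c_u+t}$ (indices modulo $n$), so there are $t$ of them and $F_{c_u}$ is not among them. For $1\le r\le t$ we have $F_{c_u+r}=\{x_{c_u+r},\dots,x_{c_u+t-1}\}\cup\{x_{c_u+t}\}\cup\{x_{c_u+t+1},\dots,x_{c_u+t+r-1}\}$, where the first block lies in $F_{c_u}\subseteq\YY$ and the middle vertex is the new one; hence $F_{c_u+r}\subseteq\YY'$ if and only if the $r-1$ vertices $x_{c_u+t+1},\dots,x_{c_u+t+r-1}$ already lie in $\YY$. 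For $r=1$ this is vacuous, so $F_{c_u+1}$ is always a facet of $\Sigma$; this already gives $|\Sigma|\ge|\Gamma|+1$.

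Finally I would run the two-way split. If $u<i$ and $c_{u+1}=c_u+t+1$, then $\{x_{c_u+t+1},\dots,x_{c_u+2t}\}$ is exactly $F_{c_{u+1}}\subseteq\YY$; since $r-1\le t-1$ for all $1\le r\le t$, every $F_{c_u+r}$ lies in $\YY'$, producing $t$ new facets and $|\Sigma|=|\Gamma|+t$. In the remaining case ($u=i$, or $u<i$ with $c_{u+1}>c_u+t+1$), the vertex $x_{c_u+t+1}$ is the one lying strictly between the block $F_{c_u}$ and the next block of $\YY$, so $x_{c_u+t+1}\notin\YY$; then the condition above fails for every $r\ge 2$, only $F_{c_u+1}$ is new, and $|\Sigma|=|\Gamma|+1$. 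I expect the one genuinely delicate point to be the index arithmetic modulo $n$ in this last case: one must verify that for $u=i$ the vertex $x_{c_i+t+1}$ really does fall outside $\ver(\Gamma)$, which is where the standing hypotheses on $\Gamma$ (a proper induced subcollection assembled from separated runs, with the relevant vertices not wrapping back onto $F_{c_1}$) are used; everything else is routine bookkeeping with the block decomposition of $\YY$.
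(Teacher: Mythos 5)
Your proof is correct and follows essentially the same route as the paper's: both arguments reduce to observing that every facet of $\Sigma$ not already in $\Gamma$ must contain the new vertex $x_{c_u+t}$, that the candidates are $F_{c_u+1},\dots,F_{c_u+t}$, and that all $t$ of them land inside the enlarged vertex set exactly when $c_{u+1}=c_u+t+1$, while only $F_{c_u+1}$ does otherwise. If anything you supply more of the counting detail than the paper does (its proof simply asserts the two cases), and you explicitly flag the cyclic wrap-around issue when $u=i$ --- namely that $x_{c_i+t+1}$ must not land back in $F_{c_1}$ --- which the paper's proof passes over in silence.
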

    \begin{proof} Since  $\Gamma$ consists of runs of length one and
    each $F_{c_u}=\{x_{c_u},x_{c_u+1},\dots,x_{c_u+t-1}\}$ we must
   have $c_{u+1}>c_u+t \mod n$ for $u\in \{1,2,\dots,i-1\}$.  There
    are two ways that $x_{c_u+t}$ could add facets to $\Gamma$ to
    obtain $\Sigma$.
\begin{enumerate}
     \item If $c_{u+1}=c_u+t+1$ then
       $F_{c_u},F_{c_u+1},\dots,F_{c_u+t+1}=F_{c_{u+1}}\in\Sigma$ or in
      other words, we have added $t$ new facets to $\Gamma$.
      \item If $c_{u+1}>c_u+t+1$ or $u=i$ then $F_{c_u+1}\in \Sigma$,
        and therefore one new facet is added to $\Gamma$.
    \end{enumerate}
 \end{proof}

The following propositions, which generalize Lemma~7.4.22 in~\cite{Jacques2004}, will help us to compute the remaining Betti
numbers.

\begin{prop}\label{lem:lem4.3} Let $\D_t(C_n)=\langle F_1,F_2,\dots,
 F_n \rangle$, $2\leq t \leq n$, be the standard labeling of the path
 complex of a cycle $C_n$ on vertex set $\XX=\{x_1,\ldots,x_n\}$. Also
 let $i$, $j$ be positive integers such that $j\leq i$ and
 $\Gamma=\langle F_{c_1},F_{c_2},\dots,F_{c_i}\rangle$ be an induced
 subcollection of $\D_{t}(C_n)$ consisting of $i$ runs of length 1,
 with $1\leq c_1<c_2<\dots<c_i\leq n$. Suppose that $W=\ver(\Gamma)\cup A \subsetneq \XX$ for
 some subset $A$ of $\{x_{c_1+t},\dots,x_{c_i+t}\} \mod n$ with
 $|A|=j$. Then the induced subcollection $\Sigma$ of $\D_t(C_n)$ on
 $W$ is an $(i+j,ti+j)$-eligible subcollection.
\end{prop}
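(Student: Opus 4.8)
The plan is to make the run structure of $\Sigma$ completely explicit and then compare it against Definition~\ref{d:eligible}. I would first write $A=\{\,x_{c_u+t}\st u\in S\,\}$ for the set $S\subseteq\{1,\dots,i\}$, $|S|=j$, that $A$ determines; since $x_{c_u+t}$ is the first vertex past the facet $F_{c_u}$ and the $i$ runs of $\Gamma$ are pairwise separated by at least one vertex (cyclically), $A\cap\ver(\Gamma)=\varnothing$, so $|W|=ti+j$, and the starting indices satisfy $c_{u+1}\ge c_u+t+1$ cyclically. Next I would partition $\{1,\dots,i\}$, viewed cyclically, into maximal \textbf{blocks}: two consecutive indices lie in the same block exactly when the single gap vertex between their runs, namely $x_{c_u+t}$, has been adjoined (so this happens only when $c_{u+1}=c_u+t+1$ and $u\in S$). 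Because $W\subsetneq\XX$, the index set $\{1,\dots,i\}$ cannot be a single block — if it were, every cyclic gap would be a lone adjoined vertex and $W$ would be all of $\XX$ — so we get a genuine partition into cyclic arcs $B_1,\dots,B_m$ of sizes $r_1,\dots,r_m$ with $r_1+\dots+r_m=i$.

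The core step is to show that $\Sigma$ is the disjoint union of $m$ runs, one grown from each block. For a block $B_k$ of size $r_k$, its $r_k$ runs together with its $r_k-1$ internal gap vertices (all in $A$) occupy a single arc of $r_k(t+1)-1$ vertices of the cycle, and a direct count of which facets of $\D_t(C_n)$ lie in that arc — the computation behind the first case of Lemma~\ref{lem:addingfacet}, iterated across the internal vertices — shows these facets form a run of length $(t+1)(r_k-1)+1$. If moreover the trailing gap vertex of the last run of $B_k$ also lies in $A$, I will call $B_k$ a $Q$-\emph{block} (otherwise a $P$-\emph{block}); by maximality of $B_k$ that last run is then not cyclically adjacent to the run beginning the next block, so the second case of Lemma~\ref{lem:addingfacet} applies and exactly one further facet is adjoined, giving a run of length $(t+1)(r_k-1)+2$. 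The step I expect to be the main obstacle is the \textbf{disjointness}: verifying that no facet of $\D_t(C_n)$ contained in $W$ straddles two blocks. For this I would show that between any two cyclically consecutive blocks a vertex of $\XX$ still lies outside $W$ — if a block ends with a run whose index is not in $S$, the trailing gap vertex of that run is absent from $W$; if that index is in $S$, then by maximality $c_{u+1}\ge c_u+t+2$ there, and the \emph{second} vertex beyond that run lies in no $F_{c_v}$ and is not of the form $x_{c_v+t}$ (run starts being at least $t+1$ apart), hence is absent from $W$. This separation forces the arcs grown from distinct blocks to be vertex-disjoint, so $\Sigma$ is exactly their disjoint union (which also recovers, in this special situation, the fact that induced subcollections of $\D_t(C_n)$ are disjoint unions of runs, \cite{AF2012}~Proposition~3.6).

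Finally I would match the parameters with Definition~\ref{d:eligible}. Let $\alpha$ and $\beta$ count the $P$- and $Q$-blocks, and for each block set $a_k=r_k-1$ (when $P$) or $b_k=r_k-1$ (when $Q$), so that $\Sigma$ is a disjoint union of $\alpha$ runs of lengths $(t+1)a_k+1$ and $\beta$ runs of lengths $(t+1)b_k+2$; put $\widehat P=\sum a_k$ and $\widehat Q=\sum b_k$. Counting the vertices of $W$ block by block gives $i=\widehat P+\widehat Q+\alpha+\beta$, and counting the $j$ adjoined vertices block by block gives $j=\sum_k(r_k-1)+\beta=i-\alpha$, hence $\widehat P+\widehat Q=j-\beta$ and $\alpha=i-j$. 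Substituting these into the two defining equalities of an eligible subcollection yields $2(\widehat P+\widehat Q)+2\beta+\alpha=i+j$ and $(t+1)(\widehat P+\widehat Q)+t(\alpha+\beta)+\beta=ti+j$, so $\Sigma$ is an $(i+j,\,ti+j)$-eligible subcollection of $\D_t(C_n)$; moreover $ti+j=|W|<n$ because $W\subsetneq\XX$.
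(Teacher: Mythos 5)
Your proof is correct and follows essentially the same route as the paper's: both decompose $\Sigma$ into maximal runs obtained by chaining consecutive facets of $\Gamma$ through the adjoined gap vertices, invoke Lemma~\ref{lem:addingfacet} to see that each resulting run has length $1$ or $2$ modulo $t+1$, and then solve for the parameters of Definition~\ref{d:eligible}. Your block-by-block count $j=\sum_k(r_k-1)+\beta$ is just a more explicit, static version of the paper's incremental observation that each vertex of $A$ raises $P+Q+\beta$ by exactly one, and your separation argument for disjointness of the blocks spells out a detail the paper delegates to the cited structure results.
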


  \begin{proof} Since  $\Gamma$ consists of runs of length one and
    each $F_{c_u}=\{x_{c_u},x_{c_u+1},\dots,x_{c_u+t-1}\}$ we must
    have $c_{u+1}>c_u+t \mod n$ for $u\in \{1,2,\dots,i-1\}$.
     The runs (or connected components) of $\Sigma$ are of the form
     $\Sigma^{\prime}=\Sigma_{U}$ where $U\subseteq W$, and can have
     one of the following possible forms.
    \renewcommand{\theenumi}{\alph{enumi}}
     \renewcommand{\theenumii}{\Roman{enumii}}
    \begin {enumerate}
    \item For some $a\leq i$: $$U=F_{c_a},$$ and therefore
      $\Sigma^{\prime}=\langle F_{c_a}\rangle$ is a run of length 1.
   \item For some $a\leq i$: $$U=F_{c_a}\cup\{x_{{c_a}+t}\},$$ and
    therefore $c_{a+1}> c_a+t+1$, so from Lemma~\ref{lem:addingfacet} we have
    $\Sigma^{\prime}=\langle F_{c_a},F_{c_a+1}\rangle$ is a run of
    length 2.
    \item For some $a\leq i$: $$U=F_{c_a}\cup
     F_{c_{a+1}}\cup\dots\cup
       F_{c_{a+r}}\cup\{x_{c_a+t},x_{c_{a+1}+t},\dots,x_{c_{a+r-1}+t}\}\hspace{.1
         in} \mod n$$ and $F_{c_{a+j}}=F_{{c_a}+j(t+1)}$ for
       $j=0,1,\dots,r$ and $r\geq 1$.  Then from
       Lemma~\ref{lem:addingfacet} above we know $\Sigma^{\prime}$ is
       a run of length $r+1+tr=(t+1)r+1$.
     \item For some $a\leq i$: $$U=F_{c_a}\cup
       F_{c_{a+1}}\cup\dots\cup
      F_{c_{a+r}}\cup\{x_{c_a+t},x_{c_{a+1}+t},\dots,x_{c_{a+r}+t}\} \hspace{.1
         in} \mod n$$ and $F_{c_{a+j}}=F_{{c_a}+j(t+1)}$ for
       $j=0,1,\dots,r$ and $r\geq 1$, and $c_{a+r+1}>c_{a+r}+t+1$ or
       ${a+r}=i$.  Then from Lemma~\ref{lem:addingfacet} we have
       $\Sigma^{\prime}$ is a run of length $r+1+tr+1=(t+1)r+2$.
   \end{enumerate}
   So we have shown that $\Sigma$ consists of runs of length $1$ and
    $2$ $\mod t+1$.

Suppose that the runs in $\Sigma$ are of the form
    described in (\ref{eqn:length1}).  By
    Definition~\ref{defn:defn3.5} we have
    $$\begin{array}{ll} |\ver
     (\Sigma)|&=(t+1)p_1+t+\dots+(t+1)p_\alpha+t+(t+1)q_1+t+1+\dots+(t+1)
       q_{\beta}+t+1~\vspace{.1in}\\
      &=(t+1)P+t\alpha+(t+1)Q+t\beta+\beta ~\vspace{.1 in}\\
     &=(t+1)(P+Q)+t(\alpha+\beta)+\beta.
    \end{array}$$

 On the other hand by the definition of $\Sigma$ we know that,
  $\Sigma$ has $ti+j$ vertices and
   therefore $$ti+j=(t+1)(P+Q)+t(\alpha+\beta)+\beta.$$ It remains to
 show that $i+j=2(P+Q)+(\alpha+\beta)+\beta$.
   Note that if $j=0$ then $\beta=P=Q=0$ and hence
  \begin{eqnarray}
      j=0&\Longrightarrow& P+Q+\beta=0\label{eqn:neweqn}.
    \end{eqnarray}
  Moreover each vertex $x_{{c_v}+t}\in A$ either increases the length
 of a run in $\Gamma$ by one and hence increases $\beta$ (the number
 of runs of length 2 in $\Gamma$) by one, or increases the length of
a run by $t+1$, in which case $P+Q$ increases by 1. We can conclude
that if we add $j$ vertices to $\Gamma$, $P+Q+\beta$ increases by
   $j$. From this and (\ref{eqn:neweqn}) we have $j=P+Q+\beta$.
   Now we solve the following system
  $$\left\{ \begin{array}{rllll}
      ti+j&=&(t+1)(P+Q)+t(\alpha+\beta)+\beta &
       \Longrightarrow& ti=t(P+Q)+t(\alpha+\beta)\\
      j&=&P+Q+\beta&\Longrightarrow& i=P+Q+\alpha+\beta
     \end{array} \right.$$
   $$ \Longrightarrow \left\{\begin{array}{lll}
   i&=&P+Q+\alpha+\beta \\
      j&=&P+Q+\beta
    \end{array} \right.
     \Longrightarrow i+j=2(P+Q)+(\alpha+\beta)+\beta. $$
 \end{proof}
\begin{prop}\label{prop:newprop} Let $C_n$  be a cycle, $2\leq t \leq n$,
and $i$ and $j$ be positive integers. Suppose that $\Sigma$ is an
$(i+j,ti+j)$-eligible subcollection of $\D_t(C_n)$, $2\leq t \leq n$.
Then with notation as in Definition~\ref{d:eligible}, there exists a
unique induced subcollection $\Gamma$ of $\D_t(C_n)$ of the form
$\langle F_{c_1},F_{c_2},\dots,F_{c_i}\rangle$ with $1\leq
c_1<c_2<\dots<c_i\leq n$ consisting of $i$ runs of length $1$, and a
subset $A$ of $\{x_{c_1+t},\dots,x_{c_i+t}\}$ $\mod \ n$, with $|A|=j$
such that $\Sigma=\D_t(C_n)_{W}$ where  $W=\ver(\Gamma)\cup A.$
\bigskip

Moreover if $\RR=\langle F_{h},F_{h+1},\dots,F_{h+m}\rangle \mod n$ is a run
in $\Sigma$ with $|\RR|=2 \mod (t+1)$, then $F_{h+m} \notin \Gamma \mod n$.
\end{prop}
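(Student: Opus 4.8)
The plan is to invert the construction in Proposition~\ref{lem:lem4.3}: given an $(i+j,ti+j)$-eligible subcollection $\Sigma$, I will peel off, inside each run of $\Sigma$, the facets that ``must have been'' in the original $\Gamma$, and collect their complements (the added vertices) to recover $A$. First I would recall from Proposition~\ref{lem:lem4.3} and the analysis of run types (a)--(d) there that every run $\RR$ of $\Sigma$ has length $\equiv 1$ or $\equiv 2 \pmod{t+1}$; a run of length $(t+1)r+1$ arose from $r+1$ runs of length $1$ of $\Gamma$ glued along $r$ added vertices, and a run of length $(t+1)r+2$ arose from $r+1$ runs of length $1$ of $\Gamma$ together with $r+1$ added vertices. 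So for each run $\RR=\langle F_h,F_{h+1},\dots,F_{h+\ell-1}\rangle \bmod n$ of $\Sigma$ I would declare the \emph{$\Gamma$-facets} of $\RR$ to be $F_h, F_{h+(t+1)}, F_{h+2(t+1)},\dots$, i.e. every $(t+1)$-st facet starting from the left end, and the \emph{added vertices} to be $x_{h+t}, x_{h+(t+1)+t}, \dots$, one sitting $t$ steps past each $\Gamma$-facet \emph{except possibly the last}: if $\ell \equiv 1 \pmod{t+1}$ we stop before the final $\Gamma$-facet, while if $\ell \equiv 2 \pmod{t+1}$ we include an added vertex past every $\Gamma$-facet including the last. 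Taking $\Gamma$ to be the union over all runs of $\Sigma$ of these $\Gamma$-facets, and $A$ the union of the added vertices, gives a candidate pair.

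Next I would verify that this pair works. The $\Gamma$-facets within one run are spaced exactly $t+1$ apart, so by Lemma~\ref{lem:addingfacet} they form runs of length $1$ and are pairwise non-adjacent within that run; between distinct runs of $\Sigma$ there is enough of a gap (at least $t$ uncoloured facets, since the runs of $\Sigma$ are the connected components of an induced subcollection, hence separated) that the $\Gamma$-facets of different runs also stay non-adjacent. Hence $\Gamma=\langle F_{c_1},\dots\rangle$ is indeed an induced subcollection of $\D_t(C_n)$ consisting of runs of length $1$; write $i'$ for its number of facets and $j'=|A|$. By construction $A\subseteq\{x_{c_1+t},\dots\}\bmod n$ and $\ver(\Gamma)\cup A = \ver(\Sigma)$, so $\D_t(C_n)_W \supseteq \Sigma$, and since adding the vertices of $A$ to $\Gamma$ produces exactly the runs of $\Sigma$ (by the length bookkeeping of Lemma~\ref{lem:addingfacet}, identical to cases (a)--(d) of Proposition~\ref{lem:lem4.3}) we get $\D_t(C_n)_W=\Sigma$. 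Counting vertices run-by-run as in Proposition~\ref{lem:lem4.3} gives $ti'+j' = (t+1)(P+Q)+t(\alpha+\beta)+\beta$ and the same relation $P+Q+\beta=j'$, forcing $i'=i$ and $j'=j$ since $\Sigma$ is $(i+j,ti+j)$-eligible. The ``moreover'' clause is then immediate from the construction: when $|\RR|=(t+1)r+2\equiv 2\pmod{t+1}$, the last $\Gamma$-facet of $\RR$ is $F_{h+r(t+1)}$, and since $\ell=(t+1)r+2$ the rightmost facet of $\RR$ is $F_{h+\ell-1}=F_{h+r(t+1)+1}=F_{h+m}$, which is \emph{one past} the last $\Gamma$-facet and hence not selected, i.e. $F_{h+m}\notin\Gamma$.

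For uniqueness I would argue that any valid pair $(\Gamma',A')$ must coincide with the one just built. Given $(\Gamma',A')$ with $\D_t(C_n)_{\ver(\Gamma')\cup A'}=\Sigma$ and $A'\subseteq\{x_{c+t}:F_c\in\Gamma'\}$, I run the forward analysis of Proposition~\ref{lem:lem4.3} on $\Gamma'$: the runs of $\Sigma$ are exactly the ``merged blocks'' of consecutive facets $F_{c_a},F_{c_{a+1}}=F_{c_a+t+1},\dots$ of $\Gamma'$ joined by the vertices of $A'$, so within each run of $\Sigma$ the facets of $\Gamma'$ sit at positions $h, h+(t+1), h+2(t+1),\dots$ from the left end — this is exactly my prescription, because the leftmost facet of each run of $\Sigma$ has no vertex of $A'$ to its immediate left (that would extend the run or is impossible for a length-$1$ contribution), pinning the phase. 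The only freedom is whether the rightmost facet of a $\equiv 2\pmod{t+1}$ run belongs to $\Gamma'$ or not; but the ``moreover'' argument shows it cannot (a length-$1$ run of $\Gamma'$ at that position together with its mandatory added vertex $x_{\cdot+t}$ would push outside $\RR$, contradicting that $\RR$ is a connected component), so $\Gamma'=\Gamma$ and then $A'=\ver(\Sigma)\setminus\ver(\Gamma)=A$.

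The main obstacle I anticipate is the bookkeeping at the \emph{endpoints of runs of $\Sigma$} — precisely pinning down the ``phase'' of the $\Gamma$-facets inside each run and handling the $\equiv 1$ versus $\equiv 2 \pmod{t+1}$ dichotomy, together with the wrap-around subtleties of working $\bmod\ n$; everything else is a direct inversion of Proposition~\ref{lem:lem4.3} via Lemma~\ref{lem:addingfacet}.
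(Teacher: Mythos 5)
Your proposal is correct and follows essentially the same route as the paper: you build $\Gamma$ by selecting every $(t+1)$-st facet from the left end of each run of $\Sigma$ and let $A$ be the complementary connecting vertices, verify the eligibility equations force the counts $i$ and $j$, and prove uniqueness by pinning the leftmost facet of each run and the exact spacing $t+1$ — which is precisely the paper's construction in (4.5)–(4.7) and its claims (a) and (b). The only difference is cosmetic: where you cite the forward analysis of Proposition~\ref{lem:lem4.3} to force the spacing, the paper reproves that step directly via the vertex $x_{u+t+1}$.
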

   \begin{proof} Suppose that $\Sigma$ consists of runs $R_1^{\prime},
    R_2^{\prime},\ldots,R_{\alpha+\beta}^{\prime}$
    where  for $k=1,2,\ldots,\alpha+\beta$
    $$\begin{array}{ll} R_{k}^{\prime}=\langle
      F_{h_k},F_{h_k+1},\dots,F_{h_k+m_k-1}\rangle & \mod n \vspace{.1 in}\\
      \ver(R_{k}^{\prime})=\{x_{h_k},x_{h_k+1},\dots,x_{h_k+m_k+t-2}\} &
    \mod n \vspace{.1 in}\\
     h_{k+1}\geq t+h_k+m_k & \mod n
    \end{array}$$
 and
 \begin{eqnarray}
     m_k=\left\{\begin{array}{lll}
           (t+1)p_k+1&\mbox{for}& k=1,2,\dots,\alpha\\
         (t+1)q_{k-\alpha}+2&\mbox{for}& k=\alpha+1,\alpha+2,\dots,\alpha+\beta.
       \end{array}
      \right. \label{eqn:mk}
   \end{eqnarray}
   For each $k$, we remove the following vertices from $\ver
   (R_{k}^{\prime})$
 \begin{eqnarray}
   \begin{array}{lll}
     x_{h_k+t},x_{h_k+2t+1},\dots,x_{h_k+p_k t+(p_k-1)}&\mod n&
      \mbox{ if } 1 \leq k \leq \alpha
      \mbox{ and } p_k\neq0  \\
      x_{h_k+t},x_{h_k+2t+1},\dots,x_{h_k+(q_{k-\alpha}+1) t+q_{k-\alpha}}& \mod n&
   \mbox{ if } \alpha+1\leq k \leq \alpha+\beta.
    \end{array}\label{eqn:deleted}
    \end{eqnarray}
   Let $\Gamma=\langle R_1,R_2,\dots R_{\alpha+\beta} \rangle$ be the induced subcollection on the remaining vertices of $\Sigma$, where
   \begin{eqnarray}
     R_k=\left\{
    \begin{array}{lll}
    \langle F_{h_k},F_{h_k+t+1},\dots,F_{h_k+(t+1)p_k}\rangle&
      \mod n &\mbox{for } 1 \leq k \leq \alpha\\
      \langle F_{h_k},F_{h_k+t+1},\dots,F_{h_k+(t+1)q_{k-\alpha}}\rangle&
      \mod n &\mbox{for }  \alpha+1\leq k \leq \alpha+\beta.
     \end{array}\right.\label{eqn:mohem}
    \end{eqnarray}
 In other words,$\mod n$,  $\Gamma$ has facets
    $$F_{h_1},F_{h_1+t+1},\dots,F_{h_1+(t+1)p_1},F_{h_2},F_{h_2+t+1},
     \dots,F_{h_2+(t+1)p_2},\dots,F_{h_{\alpha+\beta}},\dots,F_{h_{\alpha+\beta}+
       (t+1)q_{\beta}}. $$ It is clear that each $R_k$ consists of
     runs of length one. Since $\Gamma$ is a subcollection of
     $\Sigma$, no runs of $R_k$ and $R_{k^{\prime}}$ are connected to
     one another if $k\neq k^{\prime}$, and hence we can conclude that 
     $\Gamma$ is an induced subcollection of $\D_t(C_n)$ which is
     composed of runs of length one.
    From (\ref{eqn:mohem}) we have the number of runs of length 1 in
    $\Gamma$ (or the number of facets of $\Gamma$) is equal
    to $$(p_1+1)+(p_2+1)+\dots+(p_{\alpha}+1)+(q_1+1)+\dots+(q_{\beta}+1)=
   P+Q+\alpha+\beta=i.$$ Therefore, $\Gamma$ is an induced
  subcollection of $\D_t(C_n)$ which is composed of $i$ runs of
    length 1. We relabel the facets of $\Gamma$ as $\Gamma=\langle
   F_{c_1},\dots,F_{c_i}\rangle$.
   Now consider the following subset of
    $\{x_{c_1+t},\dots,x_{c_i+t}\}$ as $A$
     $$\bigcup_{k=1,p_k\neq
      0}^{\alpha}\{x_{h_k+t},x_{h_k+2t+1},\dots,x_{h_k+p_k
      t+(p_k-1)}\} \cup
    \bigcup_{k={\alpha+1}}^{\alpha+\beta}\{x_{h_k+t},x_{h_k+2t+1},
    \dots,x_{h_k+(q_{k-\alpha}+1)t+q_{k-\alpha}}\}$$ by
    (\ref{eqn:deleted}) we
    have: $$|A|=(p_1+p_2+\dots+p_{\alpha})+(q_1+1\dots+q_{\beta}+1)=P+Q+\beta=j.$$
    Then if we set $$W=(\bigcup_{h=1}^{i}F_{c_h})\cup A$$ we clearly
    have $\Sigma=(\D_t(C_n))_{W}$.
    This proves the existence of $\Gamma$. We now prove its
    uniqueness. Let $\Lambda=\langle
    F_{s_1},F_{s_2},\dots,F_{s_i}\rangle$ be an induced subcollection
    of $\D_t(C_n)$ which is composed of $i$ runs of length 1 such that
    $1\leq s_1<s_2<\dots<s_i\leq n$. Also let $B$ be a $j$- subset of
    the set
    $\begin{array}{lll}\{x_{s_1+t},x_{s_2+t},\dots,x_{s_i+t}\}&\mod
      n\end{array}$ such that
    \begin{equation}
      \Sigma=({\D_t(C_n)})_{\ver (\Lambda)\cup B}.\label{eqn:sum2}
     \end{equation}
    Suppose that $\Lambda=\langle S_1,S_2,\dots,S_{\alpha+\beta}\rangle$,
    such that for $k=1,2,\dots,\alpha+\beta$, $S_k$ is an induced
    subcollection of $R_k^{\prime}$ which consists of $y_k$ runs of
  length one. By (\ref{eqn:sum2}) we have $y_k\neq0$ for all
    $k$. Now we prove the following claims for each
    $k\in\{1,2,\dots,\alpha+\beta\}$.
  \renewcommand{\theenumi}{\alph{enumi}}
  \begin{enumerate}
   \item \emph{$F_{h_k}\in \Lambda$}.
     Suppose that $1\leq k \leq \alpha+\beta$. If $p_k=0$ we are clearly done, so consider the case $p_k\neq0$.

    Assume that $F_{h_k}\notin \Lambda$. Since $F_{h_k}$ is the only facet
     of $\Sigma$ which contains $x_{h_k}$ we can conclude that 
    $x_{h_k}\notin \ver (\Lambda)$. From (\ref{eqn:sum2}), it follows
     that $x_{h_k}\in \{x_{s_1+t},x_{s_2+t},\dots,x_{s_i+t}\}$, so
    \begin{eqnarray} x_{h_k}=x_{s_a+t} \mod n \mbox{ for some $a$}.\label{eqn:s-a}
    \end{eqnarray}
    On the other hand we know
    \begin{eqnarray*}
     F_{s_a}=\{x_{s_a},x_{s_a+1},\dots,x_{s_a+t-1}\} &\mod \ n\\
  F_{s_a+1}=\{x_{s_a+1},x_{s_a+2},\dots,x_{s_a+t}\} &\mod \ n.
    \end{eqnarray*}
    Since $R_k^{\prime}$ is an induced connected component of
    $\Sigma$, by (\ref{eqn:s-a}) we can conclude that $x_{h_k}\in
    F_{s_a+1}$ and $F_{s_a},F_{s_a+1}\in R_k^{\prime}$. However, we
    know $F_{h_k}$ is the only facet of $R_k^{\prime}$ which contains
    $x_{h_k}$ and so $F_{s_a+1}=F_{h_k}$ and then $\begin{array}{ll}
      s_a+1=h_k& \mod n\end{array}$. This and (\ref{eqn:s-a}) imply
     that $t=1 \mod n$, which contradicts our assumption $2\leq t
      \leq n$.

    $F_{u+t+1}\in R_k^{\prime}$, then $F_{u+t+1}\in S_k$.
   Assume that $F_{u+t+1}\notin S_k$ and $F_{u+t+1}\in
   R_k^{\prime}$. Let $$r_0=\min\{r: r>u , F_r\in S_k \mod n\}.$$
   Since $S_k$ consists of runs of length one we can conclude
   that $r_0\geq u+t+1$. Since $r_0\neq u+t+1$ we have $r_0\geq u+t+2$. But
   then $$x_{u+t+1}\notin \ver (\Lambda)\cup
  \{x_{s_1+t},x_{s_2+t},\dots,x_{s_i+t}\}$$ and therefore $x_{u+t+1}\notin
   \ver (\Sigma)$ which is a contradiction.
\end{enumerate}
 
 Now for each $k$, by (a) we have $F_{h_k}\in \Lambda$ and from
  repeated applications of (b) we find that
 $$F_{h_k+f(t+1)}\in S_k \hspace{.1 in}\mbox{ for }
   f=\left \{ \begin{array}{ll}
   1,2,\dots,p_k &  1\leq k \leq \alpha\\
 1,2,\dots,q_{k-\alpha}&  \alpha+1\leq k \leq \alpha+\beta.
  \end{array}\right.$$

  So $R_k \subseteq S_k$. On the other hand $S_k$ consists of runs of
  length one, so no other facet of $R'_k$ can be added to it, and
   therefore $S_k=R_k$ for all $k$. We conclude that $\Lambda=\Gamma$
  and we are therefore done.
   The last claim of the proposition is also apparent from this proof.
 \end{proof}

We are now ready to compute the remaining Betti numbers.
\begin{theorem}\label{theorem:theorem4}
 Let $n$, $i$, $j$ and $t$ be integers such that $n\geq 2$, $2\leq t
 \leq n$, and $ti+j < n$. Then 
\[
\beta_{i+j,ti+j}(R/I_t(C_n))=
\frac{n}{n-it}{i \choose j}{ n-it \choose
  i }
\]

\end{theorem}
  \begin{proof} If $I=I_t(C_n)$, from Theorem~\ref{lem:lem15}, $\beta_{i+j,ti+j}(R/I)$ is the
    number of $(i+j,ti+j)$-eligible subcollections of $\D_t(C_n)$.  
		Suppose that $\RR_{(i)}$ denotes the set of
      all induced subcollections of $\D_t(C_n)$ which are composed of
     $i$ runs of length one. By propositions~\ref{lem:lem4.3}
    and~\ref{prop:newprop} there exists a one to one correspondence
     between the set of all $(i+j,ti+j)$-eligible subcollections of
  $\D_t(C_n)$ and the set $$\RR_{(i)}\times {\left[i\right]\choose
      j}$$ where ${\left[i\right]\choose j}$ is the set of all
     $j$-subsets of a set with $i$ elements.
 By Corollary~\ref{col:col16} we have
   $|\RR_{(i)}|=\beta_{i,ti}$ and since
 $|{\left[i\right]\choose j}|={i \choose j}$ and so we apply
   Corollary~\ref{col:col4.2} to observe that
 \[
\beta_{i+j,ti+j}(R/I_t(C_n))={i \choose j}\beta_{i,ti}(R/I_t(C_n))= \frac{n}{n-ti}{i \choose j}{ n-ti
     \choose i }.
	\]

  \end{proof}

Finally, we put together Theorem~\ref{lem:lem15},
Proposition~\ref{prop:tree} and~\cite[Theorem~5.1 and
  Theorem~2.8]{AF2012}. Note that the case $t=2$ is the case of graphs
which appears in~\cite{Jacques2004}. Also note that
$\beta_{i,j}(R/I_t(C_n))=0$ for all $i\geq 1$ and $j>ti$, see for
example see for example~\cite[3.3.4]{Jacques2004}.

\begin{theorem}[{\bf Betti numbers of path ideals of cycles}]\label{t:maintheorem} Let $n$, $t$, $p$ and $d$  be integers such that
$n\geq 2$, $2\leq t \leq n$, $n=(t+1)p+d$, where $p\geq 0$, $0\leq d
 \leq t$. Then the $\mathbb{N}$-graded Betti numbers of the path ideal of the
 graph cycle $C_n$ are given by
$$\beta_{i ,j}(R/I_t(C_n))= \left \{
  \begin{array}{ll}
  t & j=n,\ d= 0,\   \displaystyle i=2\left (\frac{n}{t+1}\right )\\
   &\\
  1 & j=n,\ d\neq 0, \  \displaystyle i=2\left (\frac{n-d}{t+1}\right )+1 \\
    &\\
\displaystyle \frac{n}{{n-t\left(\frac{j-i}{t-1}\right)}}
  {\frac{j-i}{t-1}\choose \frac{ti-j}{t-1}}
 {n-t\left(\frac{j-i}{t-1}\right)\choose \frac{j-i}{t-1}} &
  \left \{\begin{array}{l} 
   j< n, \ i\leq j \leq ti, \mbox{ and }\\ \\
   \displaystyle 2p\geq \frac{2(j-i)}{t-1}\geq i 
  \end{array} \right.
  \\
&\\
  0&\mbox{otherwise.}
   \end{array} \right.
   $$
\end{theorem}
\begin{proof} 

We only need to make sure we have the correct conditions for the Betti
numbers to be nonzero.  When $j<n$, $\beta_{i,j}(R/I_t(C_n))\neq 0 \Longleftrightarrow$ 

$$\begin{array}{lll}
      & \Longleftrightarrow \left\{
       \begin{array}{l}
       \displaystyle\frac{j-i}{t-1}\geq \frac{ti-j}{t-1}~\vspace{.1in}\\
      \displaystyle n-\frac{t(j-i)}{t-1}\geq \frac{j-i}{t-1}
      \end{array}\right. 
      & \\
      &&  \\
      & \Longleftrightarrow \left\{
      \begin{array}{l}
       \displaystyle 2j\geq (t+1)i~\vspace{.1 in} \\
        \displaystyle n\geq \left(\frac{t+1}{t-1}\right)(j-i)
      \end{array}\right.  
    &\\
   &&  \\   
    &\Longleftrightarrow  \left\{
   \begin{array}{l}
       \displaystyle 2j\geq (t+1)i~\vspace{.1 in} \\
   \displaystyle (t+1)p+d\geq \left(\frac{t+1}{t-1}\right)(j-i)
   \end{array}\right. 
   & \\
   &&\\
     &\Longleftrightarrow \left\{
      \begin{array}{l}
       \displaystyle 2j\geq (t+1)i~\vspace{.1 in} \\
       \displaystyle p+\frac{d}{t+1}\geq \frac{j-i}{t-1}
     \end{array}\right. 
   &\\
   &&  \\
   &\Longleftrightarrow
   \displaystyle 2p\geq \frac{2(j-i)}{t-1}\geq i 
    & \mbox{ as } d<t+1 \\
\end{array}$$
   \end{proof}

\begin{theorem}[{\bf Betti numbers of path ideals of lines }]\label{newtheorem} Let $n$, $t$, $p$ and $d$  be integers such that
$n\geq 2$, $2\leq t \leq n$, $n=(t+1)p+d$, where $p\geq 0$, $0\leq d
 \leq t$. Then \renewcommand{\theenumi}{\roman{enumi}}
the $\mathbb{N}$-graded Betti numbers of the path ideal of the
 path graph $L_n$ are nonzero and equal to 
$$\beta_{i ,j}(R/I_t(L_n))=
\displaystyle{\frac{j-i}{t-1}\choose \frac{ti-j}{t-1}}
{n-t\left(\frac{j-i}{t-1}\right)\choose \frac{j-i}{t-1}} +
{\frac{j-i}{t-1}-1\choose \frac{ti-j}{t-1}}
{n-t\left(\frac{j-i}{t-1}\right)\choose \frac{j-i}{t-1}-1}$$
if and only if 
\begin{enumerate}
\item $j\leq n$ and  $i\leq j \leq ti$;
\item If $d<t$  then $ \displaystyle     p\geq \frac{j-i}{t-1} \geq i/2$ where both inequalities cannot be $=$ at the same time; 
\item If  $d=t$ then $\displaystyle (p+1)\geq \frac{j-i}{t-1} \geq i/2$ where both inequalities cannot be $=$ at the same time.
\end{enumerate}
\end{theorem}
  \begin{proof} We use induction on $n$. Suppose $n=2$
 so $t=2$. Then we have $I_t(L_n)=(x_1x_2)$ and it is clear that the
only nonzero Betti number of $R/I_t(L_n)$ is $\beta_{1,2}=1$. So the
assertion is clear. So suppose $n>2$. To continue we will use the
 following formula for each integer $N,R>1$
			\[
 \binom{N}{R}+\binom{N}{R-1}=\binom{N+1}{R}. 
\]
From~\cite{R.Bouchat2010} we have the recursive formula
$$\beta_{i,j}(R/I_t(L_n))=
\beta_{i,j}(R/I_t(L_{n-1}))+\beta_{i-1,j-t}(R/I_t(L_{n-(t+1)}))+\beta_{i-2,j-t-1}(R/I_t(L_{n-(t+1)}))$$
which using the induction hypothesis leads to the following calculation for
$\beta_{i,j}(R/I_t(L_n))$
$$\begin{array}{l}
\displaystyle\binom{\frac{j-i}{t-1}}{\frac{ti-j}{t-1}}
  \binom{n-1-t\left(\frac{j-i}{t-1}\right)}{\frac{j-i}{t-1}}
  +
\binom{\frac{j-i}{t-1}-1}{\frac{ti-j}{t-1}}
\binom{n-1-t\left(\frac{j-i}{t-1}\right)}{\frac{j-i}{t-1}-1} \\ \\ 
+\displaystyle\binom{\frac{j-t-i+1}{t-1}}{\frac{ti-t-j+t}{t-1}}
\binom{n-(t+1)-t\left(\frac{j-t-i+1}{t-1}\right)}{\frac{j-t-i+1}{t-1}}
+\displaystyle\binom{\frac{j-t-i+1}{t-1}-1}{\frac{ti-t-j+t}{t-1}}
\binom{n-(t+1)-t\left(\frac{j-t-i+1}{t-1}\right)}{\frac{j-t-i+1}{t-1}-1}\\ \\ 
+\displaystyle\binom{\frac{j-t-1-i+2}{t-1}}{\frac{ti-2t-j+t+1}{t-1}}
\binom{n-(t+1)-t\left(\frac{j-t-1-i+2}{t-1}\right)}{\frac{j-t-1-i+2}{t-1}} 
+\displaystyle\binom{\frac{j-t-1-i+2}{t-1}-1}{\frac{ti-2t-j+t+1}{t-1}}
\binom{n-(t+1)-t\left(\frac{j-t-1-i+2}{t-1}\right)}{\frac{j-t-1-i+2}{t-1}-1}.
\end{array}$$
For ease of writing, we set $A=\frac{j-i}{t-1}$ and
$B=\frac{ti-j}{t-1}$, so we have that
$$\begin{array}{l}
  \beta_{i,j}(R/I_t(L_n))=
\binom{A}{B}
\binom{n-1-tA}{A} 
+ \binom{A-1}{B}
\binom{n-1-tA}{A-1} \\ \\
+\binom{A-1}{B}\binom{n-1-tA}{A-1}
+\binom{A-2}{B}
\binom{n-1-tA}{A-2}
+
\binom{A-1}{B-1}
\binom{n-1-tA}{A-1}+\binom{A-2}{B-1}
\binom{n-1-tA}{A-2} \\ \\ 
=\left[\binom{A-1}{B}+
\binom{A-1}{B-1}
\right]\binom{n-1-tA}{A-1}
+\left[\binom{A-2}{B}+ 
\binom{A-2}{B-1}\right]\binom{n-1-tA}{A-2}
+\binom{A}{B}\binom{n-1-tA}{A}
+\binom{A-1}{B}\binom{n-1-tA}{A-1} \\ \\ 
=\binom{A}{B}\binom{n-1-tA}{A-1}+
\binom{A-1}{B}\binom{n-1-tA}{A-2}
+\binom{A}{B}\binom{n-1-tA}{A}
+\binom{A-1}{B}\binom{n-1-tA}{A-1}  \\ \\
=\binom{A}{B}\left[\binom{n-1-tA}{A-1}
+\binom{n-1-tA}{A}\right]
+\binom{A-1}{B}\left[\binom{n-1-tA}{A-2}+
\binom{n-1-tA}{A-1}\right]\\ \\
=\binom{A}{B}\binom{n-tA}{A}
+\binom{A-1}{B}\binom{n-tA}{A-1}.
\end{array}$$

Using the notation above, we see that 
$\beta_{i,j}(R/I_t(L_n))\neq 0$ if and only if
$$[B\leq A \mbox{ and } A \leq n-tA ] \mbox{ or }
[B\leq A-1 \mbox{ and } A-1 \leq n-tA ]$$
which is equivalent to saying that 
$$B\leq A \mbox{ and } A-1 \leq n-tA  \mbox{ where both $\geq$ cannot be $=$ at the same time}.$$
In other words $\beta_{i,j}(R/I_t(L_n))\neq 0$ if and only if 
$$ \begin{array}{ll}
  \displaystyle\frac{j-i}{t-1}\geq \frac{ti-j}{t-1} \mbox{ and }
       \displaystyle n-\frac{t(j-i)}{t-1}\geq \frac{j-i}{t-1}-1
      & \iff \\
     \displaystyle 2j\geq (t+1)i \mbox{ and }
     \displaystyle n+1\geq \left(\frac{t+1}{t-1}\right)(j-i) 
     & \iff \\
     \displaystyle 2j\geq (t+1)i \mbox{ and }
     \displaystyle (t+1)p+d+1\geq \left(\frac{t+1}{t-1}\right)(j-i)
     & \iff \\
     \displaystyle 2j\geq (t+1)i \mbox{ and }
    \displaystyle p+\frac{d+1}{t+1}\geq \frac{j-i}{t-1} &  \\

\end{array} $$
where in each line both $\geq$ cannot be $=$ at the same time. This is equivalent to 
$$   \left\{ 
   \begin{array}{ll}
   \displaystyle 2p\geq \frac{2(j-i)}{t-1}\geq i 
   & \mbox{ if } d<t \\
    \displaystyle 2(p+1)\geq \frac{2(j-i)}{t-1}\geq i 
   & \mbox{ if } d=t   
   \end{array}\right.$$
(note that $\frac{j-i}{t-1}$ 
from Theorem~\ref{lem:lem15} and Definition~\ref{d:eligible} is an integer) where both $\geq$ cannot be $=$ at the same time in the second line.
   \end{proof}

We can now easily derive the projective dimension and regularity of
path ideals of paths, which were known before. The projective
dimension of paths (Part i below) was computed in~\cite{He2010} using different methods. The case $t=2$ is the case
of graphs which appears in \cite{Jacques2004}.  Part ii of the
following Corollary reproves \cite[Theorem~5.3]{R.Bouchat2010} which
computes the Castelnuovo-Mumford regularity of path ideal of a path.
The case of cycles was done in~\cite{AF2012}.
\begin{col}[{\bf Projective dimension and regularity of path ideals of paths}]\label{c:pdr}
 Let $n$, $t$, $p$ and $d$ be integers such that $n\geq 2$, $2\leq t
 \leq n$, $n=(t+1)p+d$, where $p\geq 0$, $0\leq d \leq t$. Then
 \renewcommand{\theenumi}{\roman{enumi}}
\begin{enumerate}
\item The projective dimension of the path ideal of a path $L_n$ is given by
$$pd(R/I_t(L_n))=\left\{\begin{array}{ll}
2p & d\neq t~\vspace{.1 in}\\
2p+1&d=t\\
\end{array}\right.$$

\item The regularity of the path ideal of a path $L_n$ is given by
$$reg(R/I_t(L_n))=\left\{\begin{array}{ll}
p(t-1) & d<t\vspace{.1 in}\\
(p+1)(t-1) & d=t\\
\end{array}\right.$$
\end{enumerate}
\end{col}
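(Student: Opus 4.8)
The plan is to read both invariants straight off Theorem~\ref{t:maintheorem}(ii), which already records precisely the set of pairs $(i,j)$ with $\beta_{i,j}(R/I_t(L_n))\neq 0$. Recall that $\mathrm{pd}(R/I_t(L_n))$ is the largest $i$ for which $\beta_{i,j}(R/I_t(L_n))\neq 0$ for some $j$, and $\mathrm{reg}(R/I_t(L_n))$ is the largest value of $j-i$ over all such nonzero Betti numbers. So the entire proof reduces to maximizing $i$, and separately $j-i$, over the region described by conditions (a)--(c) of Theorem~\ref{t:maintheorem}(ii).

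First I would introduce the integer $\ell=\frac{j-i}{t-1}$ (an integer on the nonvanishing region, as used in the proof of Theorem~\ref{t:maintheorem}) and rewrite the constraints: for $d<t$ they become $i/2\le \ell\le p$ together with $i\le j\le ti$ and $j\le n$, while for $d=t$ they become $(i+1)/2\le \ell\le p+1$ with the same side conditions. From $i/2\le \ell\le p$ one gets $i\le 2p$, and from $(i+1)/2\le \ell\le p+1$ one gets $i\le 2p+1$; these are the claimed upper bounds for $\mathrm{pd}$. To see they are attained I would exhibit explicit witnesses: for $d<t$ take $(i,j)=(2p,(t+1)p)$, so $\ell=p$; for $d=t$ take $(i,j)=(2p+1,(p+1)t+p)$, so $\ell=p+1$. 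In each case a quick check shows $j\le n$, $i\le j\le ti$, and the bounds on $\ell$ all hold (with equality), hence by Theorem~\ref{t:maintheorem}(ii) the Betti number is nonzero; this proves (i). For the regularity, $j-i=(t-1)\ell$, so it is maximized by maximizing $\ell$, giving $\mathrm{reg}\le p(t-1)$ when $d<t$ and $\mathrm{reg}\le (p+1)(t-1)$ when $d=t$; equality is witnessed by $(i,j)=(p,pt)$ for $d<t$ and $(i,j)=(p+1,(p+1)t)$ for $d=t$, and again one verifies conditions (a)--(c) directly. This proves (ii).

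The only point requiring a little care — the main obstacle — is the bookkeeping at the boundary. The hypotheses $n\ge 2$, $t\le n$, $n=(t+1)p+d$, $0\le d\le t$ force $d=t=n$ when $p=0$, so the ``$d<t$'' formulas are only ever invoked with $p\ge 1$, where the witness $i=2p\ge 2$ is a genuine homological degree; and the ``$d=t$'' formulas specialize at $p=0$ to $\mathrm{pd}=1$, $\mathrm{reg}=n-1$, which matches the resolution $0\to R(-n)\to R\to R/I_n(L_n)\to 0$ of the principal ideal $I_n(L_n)=(x_1\cdots x_n)$. I would also be explicit about why the regularity witnesses are chosen with $i=p$ (resp.\ $i=p+1$) rather than $i=1$: the condition $i\le j\le ti$ with $\ell$ maximal reduces, after clearing $t-1$, to $p\le i$ (resp.\ $p+1\le i$), so small $i$ need not be admissible, whereas $i=p$ (resp.\ $i=p+1$) always is. Everything else is a routine substitution, so no deeper argument is needed once the region is written in the $\ell$-coordinate.
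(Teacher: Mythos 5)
Your proof is correct and follows essentially the same route as the paper: both read the answer off Theorem~\ref{t:maintheorem}(ii) by bounding $i$ (resp.\ $j-i$) over the nonvanishing region and then exhibiting the same witnesses, $(i,j)=(2p,(t+1)p)$ for $d\neq t$ and $(i,j)=(2p+1,n)$ for $d=t$. Your treatment is in fact slightly more complete than the paper's, which leaves the regularity maximization and the $p=0$ boundary case implicit.
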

 \begin{proof} \renewcommand{\theenumi}{\roman{enumi}}
   \begin{enumerate}
     \item By using Theorem~\ref{t:maintheorem} we know that if $\beta_{i,j}(R/I_t(L_n)\neq 0$ then  $i\leq 2p+1$ when $d=t$ and therefore $pd(R/I_t(L_n))\leq 2p+1$.    
       On the other hand by applying Theorem~\ref{t:maintheorem} we have
   $$\beta_{2p+1,n}(R/I_t(L_n))= \displaystyle{p+1\choose p} {p\choose
    p+1} + {p\choose p}{p\choose p}=1\neq 0.$$ Then we can conclude that $pd(R/I_t(L_n))=2p+1$.
Now we suppose that $d\neq t$. From Theorem~\ref{newtheorem} we can see
 that if $\beta_{i,j}(R/I_t(L_n))\neq 0$ then $2p\geq i$ and
  therefore $pd(R/I_t(L_n))\leq 2p$.  On the other hand, by applying
 Theorem~\ref{newtheorem}, we can see that
    \begin{equation*}
    \beta_{2p,p(t+1)}(R/I_t(L_n))= \displaystyle{p\choose p}
    {p+d\choose p} + {p-1\choose p}{p\choose p}={p+d\choose p}\neq 0.
  \end{equation*}
  Therefore $pd(R/I_t(L_n))\geq 2p$ and we have $pd(R/I_t(L_n))= 2p$.
 \item By definition, the regularity of a module $M$ is $\max \{j-i
   \st \beta_{i,j}(M)\neq 0\}$. By Theorem~\ref{newtheorem}, we
    know exactly when the graded Betti numbers of $R/I_t(L_n)$ are
    nonzero, and the formula follows directly.
    \end{enumerate}
  \end{proof}
\section*{Acknowledgement}
We gratefully acknowledge the helpful computer algebra systems CoCoA~\cite{CoCoA-5} and 
Macaulay2~\cite{Macaulay2}, without which our work would have been difficult or impossible. We also thank the referee whose comments improved the paper.

\bibliographystyle{alpha}
\bibliography{JAA-graded-path[1100]}

\end{document}